\documentclass[3p]{elsarticle}

\usepackage[utf8]{inputenc}
\usepackage{amsmath}
\usepackage{amssymb}
\usepackage{mathtools}
\usepackage{amsthm}
\usepackage{hyperref}

\usepackage[dvipsnames]{xcolor}

\newtheorem{theorem}{Theorem}[section]

\newtheorem{lemma}{Lemma}[theorem]
\newtheorem{proposition}{Proposition}[section]
\newtheorem{remark}{Remark}[theorem]

\DeclareMathOperator{\minmod}{minmod}
\DeclareMathOperator{\lcm}{lcm}
\DeclareMathOperator*{\argmax}{arg\,max}

\newcommand{\half}{\frac{1}{2}}
\setcounter{MaxMatrixCols}{11}

\usepackage{numcompress}\bibliographystyle{model4-names}\biboptions{numbers,sort&compress}

\begin{document}

\begin{frontmatter}
	
\title{A Finite Volume Method for Continuum Limit Equations of Nonlocally Interacting Active Chiral Particles}
\author[1]{Nikita Kruk}
\ead{nikita.kruk@bcs.tu-darmstadt.de}
\author[2]{Jos\'{e} A. Carrillo}
\ead{carrillo@maths.ox.ac.uk}
\author[1]{Heinz Koeppl\corref{cor1}}
\ead{heinz.koeppl@bcs.tu-darmstadt.de}
\cortext[cor1]{Corresponding author}
\address[1]{Department of Electrical Engineering and Information Technology, Technische Universit\"{a}t Darmstadt, Rundeturmstrasse 12, 64283, Darmstadt, Germany}
\address[2]{Mathematical Institute, University of Oxford, Oxford OX2 6GG, UK}
	
\begin{abstract}
	The continuum description of active particle systems is an efficient instrument to analyze a finite size particle dynamics in the limit of a large number of particles. However, it is often the case that such equations appear as nonlinear integro-differential equations and purely analytical treatment becomes quite limited. We propose a general framework of finite volume methods (FVMs) to numerically solve partial differential equations (PDEs) of the continuum limit of nonlocally interacting chiral active particle systems confined to two dimensions. We demonstrate the performance of the method on spatially homogeneous problems, where the comparison to analytical results is available, and on general spatially inhomogeneous equations, where pattern formation is predicted by kinetic theory. We numerically investigate phase transitions of particular problems in both spatially homogeneous and inhomogeneous regimes and report the existence of different first and second order transitions.
\end{abstract}

\begin{keyword}
	active particle flow \sep positivity preserving \sep dimensionality splitting \sep phase transitions
\end{keyword}

\end{frontmatter}

\section{Introduction}

Collective motion of groups of multiple agents of various origin is a fascinating phenomenon that manifests itself in versatile systems, ranging from microscopic scale, e.g., colloidal suspensions \cite{palacci2013:science,geyer2018:nature_materials,kaiser2017:science}, through mesoscopic scale, e.g., bacterial suspensions and microtubule bundles \cite{thampi2016:science,chen2017:nature,wu2017:science}, to macroscopic scale, e.g., schools of fish and flocks of birds \cite{lopez2012:interface,cavagna2014:arcmp,DDT14}. The study of such systems is an active field of research, both theoretically and experimentally. When a system consists of self-propelled agents, i.e., the ones that are able to move without any external forcing, it is referred to as an active matter system. The direct approach to model such systems is to describe the motion of each constituent agent with differential equations. However, when the number of agents is large, this approach becomes computationally expensive and one looks for a respective continuum description.

First approaches for the continuum description of active matter were based on symmetry arguments and conservation laws \cite{toner2005:ap,bertin2006:pre} and described the evolution of hydrodynamic variables, e.g., a marginal density function and a polar order field. This approach allowed to reproduce some of the behavior given by an agent-based description but the resulting equations were not linked to the microscopic parameters, thus, not allowing for the respective analysis of the agent-based behavior. As an alternative, the kinetic theory presents a systematic way to construct a continuum description of an agent system via density functions of agent's positions and velocities \cite{CDP09,CFTV10,canizo:mmmas,CCH14,carrillo:jsp,lancellotti:ttsp,neunzert,archer:jpa}. Mostly, one is interested in a time evolution of a density function of one agent, which is governed by a nonlinear PDE. As a result, the terms in such a PDE do depend on microscopic parameters from the agent-based model, thus, allowing one to study the continuum limit behavior of a particle system in terms of those parameters.

In this paper, we are interested in the construction of effective FVMs for numerical integration of PDEs derived as the continuum limit of nonlocally interacting self-propelled particle systems in two dimensions that takes the following form:
\begin{equation}
\label{eq:continuum_agent_description}
	\partial_t f(r,\varphi,t) = -v_0e(\varphi) \cdot \nabla_r f(r,\varphi,t) - \partial_{\varphi}\Bigl[w[f](r,\varphi,t) f(r,\varphi,t)\Bigr] + D_\varphi\partial_{\varphi\varphi}f(r,\varphi,t),
\end{equation}
where $r\in\mathbb{R}^2$ is a position vector, $e(\varphi)\in\mathbb{S}^1$ is a unit velocity vector that depends on particle's orientation $\varphi \in \mathbb{R}/(2\pi\mathbb{Z})=:\mathbb{T}$, $t \in \mathbb{R}_+$ is time, $D_\varphi \in \mathbb{R}_+$ is a rotational diffusion coefficient, $f: \mathbb{R}^2\times\mathbb{T}\times\mathbb{R}_+ \rightarrow \mathbb{R}_+$ is a one-particle probability density function which quantifies the probability to find a particle with a position $r$, orientation $\varphi$ at time $t$, $w$ is some functional that represents nonlocal interaction between particles. Besides the high dimensionality of the problem, the presence of the nonlocal interaction requires particular attention to the performance of constructed numerical schemes. Numerical integration of such continuum limit kinetic equations for active matter systems is an ongoing research \cite{carrillo:commun_comput_phys,thueroff:prx2014,poeschel_schwager,ihle2013:pre,carrillo2016:jcp,BCCD16}. For a general survey on the numerical treatment of kinetic equations, we refer the reader to \cite{dimarco_pareschi_2014}.

An important problem in active matter systems is the study of related phase transitions versus model parameters. Its knowledge allows one to analyze, predict, control, and design particle systems with experimentally desirable properties. Depending on the context, one might consider particles of different origins, which in the continuum limit frequently assume polar or nematic representation with polar or nematic interactions \cite{bertin2009:jpamt,degond2015:arma,peshkov2012:prl,peshkov2014:epjst,nagai2015:prl,BCCD16,patelli2019:prl,levis2019:prr,carrillo2019:jcp,frouvelle2012siam_jma}. Phase transitions are commonly quantified in terms of a polar order parameter. It has been shown that depending on different regimes, one might observe first as well as second order transitions.

Most active matter models describe particles that propel themselves along the direction of motion. Besides such models, there has recently been an increase of interest in models where particles can undertake circular motion, known as chiral active matter. Its examples include bacterial swarming close to boundaries of a substrate \cite{lauga2006bpj,lemelle2010jb}, irregular vortex structures in dense suspensions of swimming bacteria \cite{sumino2012nature}, swarming of magnetotactic bacteria in a rotating magnetic field \cite{erglis2007bpj,cebers2011jmmm}, swimming of sperm cells \cite{riedel2005science,friedrich2007pnas}, and shimmering behavior of giant honeybees against predatory wasps \cite{kastberger2008plos_one}. In the present work, we are interested in two-dimensional stochastic dynamics of self-propelled particles with alignment interactions subject to a phase lag. The presence of the phase lag induces particle rotation, rendering the model chiral. Unlike other models, where such rotation is explicitly regulated with a given rotational frequency \cite{DDT14,liebchen2017prl,chen2017:nature}, our model describes irrotational particles that rotate only upon interactions, thus, exhibiting spontaneous symmetry breaking as a collective phenomenon. It appears that particles with such shifted alignment interactions may self-organize into a large variety of spatially inhomogeneous chiral patterns \cite{kruk2020pre} such as traveling bands, dense clouds, and vortices as well as localized chimera states \cite{kruk:aps2018}. In \cite{kruk2020pre}, we presented collective dynamics in terms of a finite-size particle model and used its kinetic and hydrodynamics descriptions to perform stability analysis of spatially homogeneous analytic solutions. In this paper, we are interested in numerical analysis of the kinetic description for spatially inhomogeneous dynamics, which are not expressed analytically. Moreover, by developing an accurate numerical scheme, we want to analyse the nature of phase transitions between different states of the model of interest.

The paper is organized as follows. In Section~\ref{sec:continuum_model}, we introduce a particular particle model and its continuum limit PDE, which we are interested to numerically investigate. We also provide some of the analytical results that will be used later in the construction of numerical methods as well as in their performance assessment. In Section~\ref{sec:numerical_scheme}, we describe the construction of FVMs appropriate for numerical investigation of active matter continuum limit equations in the form of Eq.~\eqref{eq:continuum_agent_description}. In Section~\ref{sec:numerical_tests}, we demonstrate how our schemes perform in recovering spatially homogeneous and inhomogeneous solutions as well as study related phase transitions. Finally, we summarize results of the present paper and outline the future work in Section~\ref{sec:conclusion}.

\section{Active Brownian particle flow in the continuum limit\label{sec:continuum_model}}

We begin the discussion by first introducing an exemplary particle model that lies in the origins of the current research. The model is a starting point for the definition of continuum limit equations \eqref{eq:continuum_agent_description}, which we want to numerically integrate. Moreover, it serves as a reference point for interpretation of solutions of continuum limit PDEs. Next, we will present a PDE corresponding to the particle model along with its key properties, which will be used to test the performance of finite volume schemes in Section~\eqref{sec:numerical_tests}.

\subsection{Finite size particle model}

Let $\mathbb{U} \coloneqq \mathbb{R}/(L\mathbb{Z})$ and $\mathbb{T} \coloneqq \mathbb{R}/(2\pi\mathbb{Z})$ be one-dimensional spaces with periodic boundaries extending from $[0,L]$ and $[0,2\pi]$, respectively. We consider a system of $N\in\mathbb{N}$ self-propelled particles moving in a two-dimensional domain with periodic boundaries. Each particle is assigned a unique index $i=1,\dots,N$. A spatial position of a particle is described with a vector with periodic components $r_i=(x_i,y_i)\in\mathbb{U}^2$. We assume that particles move in an overdamped regime with a constant speed $v_0\in\mathbb{R}_+$. Thereby, their velocity is completely determined by their direction of motion $\varphi_i\in\mathbb{T}$. Let us denote a unit velocity vector as $e(\varphi_i)=(\cos\varphi_i,\sin\varphi_i)\in\mathbb{S}^1$. As a result, we consider the state vector of a particle $i$ to be $p_i=(x_i,y_i,\varphi_i)\in\mathbb{U}^2\times\mathbb{T} =:\Omega$. We consider particles as interacting $\Omega$-valued processes that are solutions to the following coupled system of stochastic differential equations (SDEs):
\begin{equation}
\label{eq:chimera_sde}
	\mathrm{d}r_i = v_0 e(\varphi_i)\; \mathrm{d}t,\qquad \mathrm{d}\varphi_i = \frac{\sigma}{\vert B_{\varrho}^i \vert} \sum_{j \in B_{\varrho}^i} \sin(\varphi_j - \varphi_i - \alpha)\; \mathrm{d}t + \sqrt{2D_\varphi}\; \mathrm{d}W_i.
\end{equation}
The first equation described a self-propulsion a particle in the direction of $\varphi_i$. The right hand side of the second equation consists of a nonlocal particle interaction via alignment and the Wiener process, which plays a role of an external perturbation, respectively. The strength of nonlocal alignment is controlled with a parameter $\sigma\in\mathbb{R}_+$. Particles interact with their nonlocal neighborhood, which consists of all particles within the distance $\varrho\in\mathbb{U}$ and is defined as
\begin{equation*}
	B_\varrho^i=\left\{ j\in\{1,\dots,N\}\backslash i \mid (x_i-x_j)^2+(y_i-y_j)^2\leq\varrho^2 \right\},
\end{equation*}
and $\vert B_{\varrho}^i \vert$ denotes the neighborhood's cardinality. We postulate that particles' alignment mechanism is subject to a phase lag $\alpha\in\mathbb{T}$. If $\alpha=0$, particles align to the directions of their neighbors. Otherwise, particles perform excessive rotation reminiscent of chiral motion. In a noninteracting regime $\sigma=0$, particles exhibit Brownian motion, modeled by a family of independent Wiener processes $W_i$ with a self-diffusion coefficient $D_\varphi\in\mathbb{R}_+$. As a result, particle's dynamics are determined though an interplay of nonlocal alignment and external stochastic forces. As an initial value problem (IVP), we consider this system together with independent and identically distributed initial data $p_i(0)\in\Omega$ for $i=1,\dots,N$.

Such systems of coupled Langevin equations are a common method to formalize the collective dynamics of interacting particles and they have been extensively investigated on the matter of self-organization phenomena \cite{vicsek:prl} and related phase transitions. The most famous phenomenon is the spontaneous rotational symmetry breaking resulting in the emergence of orientational order, either polar or higher order one. Apart from that, recent attention has been devoted to the revealing of spatially inhomogeneous pattern formation. In regard to the aforementioned model \eqref{eq:chimera_sde}, it has been shown previously \cite{kruk2020pre} that it generates a large variety of intriguing spatially inhomogeneous chiral dynamics, the most prominent of which are traveling bands, dense clouds, and vortices, as well as localized chimera states \cite{kruk:aps2018}.

\subsection{Continuum limit}

When the number of particles becomes large, we look for a continuum description of a particle system. We expect that such a description is more efficient compared to the finite size particle system. The dynamical density functional theory provides a number of ways to derive density functions of particle state variables starting from the Langevin dynamics like \eqref{eq:chimera_sde}. Even though it is possible to derive continuum limit equations in terms of joint many-particle density functions that incorporate interactions of any order, one usually restricts oneself to consider a one-particle density function using a mean-field approximation or a molecular chaos assumption \cite{spohn1991springer}. This approximation postulates that particle correlations are negligible allowing a many-particle density function to be decomposed into a product of one-particle density functions. We note that it has been shown \cite{grossmann2020nat_comm} that the symmetry of the emergent order can be different from the symmetry of particle interactions due to correlations and the standard mean-field approach does not work. In our case, particles are modeled as point masses that interact by alignment only. Therefore, we do not expect the emergence of the order higher than the polar one even if the collisions were considered, and assume the mean-field approximation in our derivation of kinetic equations.

We have shown \cite{kruk2020pre} using the framework of the Fokker-Planck equation \cite{archer:jpa} that the continuum description of \eqref{eq:chimera_sde} with macroscopic scaling \cite{kipnis1998scaling} is given by a one-particle probability density function evolving according to the Vlasov-Fokker-Planck \cite{risken,dobrushin,neunzert} equation
\begin{equation}
\label{eq:continuum_chimera_model}
	\begin{split}
	&\partial_t f(r,\varphi,t) = -v_0e(\varphi)\cdot\nabla_r f(r,\varphi,t) - \partial_\varphi \Bigl[ w[f](r,\varphi,t)f(r,\varphi,t) \Bigr] + D_\varphi\partial_{\varphi\varphi} f(r,\varphi,t)
\end{split}
\end{equation}
subject to an initial condition:
\begin{equation}
\label{eq:continuum_chimera_model_initial_condition}
	f(r,\varphi,0) = f_0(r,\varphi),\quad f_0(r,\varphi)\geq0,\quad \int_\Omega f_0(r,\varphi)\; \mathrm{d}r\mathrm{d}\varphi = 1.
\end{equation}
The density function $f(r,\varphi,t)$ quantifies the probability to find a particle at a given position $r\in\mathbb{U}^2$ with a given orientation $\varphi\in\mathbb{T}$ at time $t$. As before, $e(\varphi)=(\cos\varphi,\sin\varphi)\in\mathbb{S}^1$ is a unit vector in the direction of self-propulsion $\varphi$. Note that $\nabla_r=(\partial_x,\partial_y)$ denotes a spatial gradient. The rotational velocity or torque exerted by a nonlocal neighborhood is found to be
\begin{equation}
\label{eq:velocity_component_in_angular_direction}
	w[f](r,\varphi,t) = \frac{\sigma}{\vert C(r;\varrho)\vert} \int_{C(r;\varrho)} f(r',\varphi',t) \sin(\varphi' - \varphi - \alpha)\; \mathrm{d}r'\mathrm{d}\varphi'.
\end{equation}
The region of nonlocal interaction is now defined as a cylinder
\begin{equation*}
	C(r;\varrho) = \left\{ (r',\varphi')\in\mathbb{U}^2\times\mathbb{T} \mid \Vert r-r' \Vert \leq \varrho \right\} \subset \Omega.
\end{equation*}
The size of the nonlocal neighborhood, which is quantified via $|B_\varrho^i|$ in \eqref{eq:chimera_sde}, is now measured as
\begin{equation}
\label{eq:spatially_nonhomogeneous_neighborhood_mass}
	\vert C(r;\varrho)\vert = \int_{C(r;\varrho)} f(r',\varphi',t)\; \mathrm{d}r'\mathrm{d}\varphi'.
\end{equation}
One should keep in mind that in general this neighborhood mass is time-dependent but we usually omit this dependence for notational simplicity. 

Eq.~\ref{eq:velocity_component_in_angular_direction} determines the polarization of a particle flow around a given point (up to a phase shift $\alpha$). This can be formulated by introducing an interaction kernel $K(r,\varphi)=H_\varrho(r)\sin(\varphi+\alpha)$, which is a product of a Heaviside step function $H_\varrho(r) = H(\varrho - \Vert r\Vert)$, which ensures that only a particle flow within the distance $\varrho$ is accounted for, and a shifted alignment function. We can therefore express the rotational velocity functional in a more general form as 
\begin{equation*}
	w[f](r,\varphi,t) = -\frac{\sigma}{\vert C(r;\varrho)\vert} \left[ K(r,\varphi) * f(r,\varphi,t) \right](r,\varphi,t).
\end{equation*}
Whereas this functional form of the angular velocity $w[f](r,\varphi,t)$ does not change the dynamics of the one-particle density function, the convolutional form of the alignment interaction allows for a substantial decrease of temporal complexity of numerical algorithms by means of the discrete Fourier transform \cite{press2002numerical}.

We observe that by writing the diffusion term as $\partial_{\varphi\varphi}f = \partial_\varphi(f\partial_\varphi\log f)$, we can combine the last two terms in \eqref{eq:continuum_chimera_model} in the form of a gradient flow as
\begin{equation}
\label{eq:gradient_flow_form_for_angular_components}
	-\partial_\varphi\{ [w[f](r,\varphi,t) - D_\varphi \partial_\varphi\ln f(r,\varphi,t)] f(r,\varphi,t) \} =: \partial_\varphi [\partial_\varphi\xi[f](r,\varphi,t)f(r,\varphi,t)].
\end{equation}
The new functional $\xi$ denotes the potential function of the flow in the angular direction. However, we cannot extend this form to all the right hand side of \eqref{eq:continuum_chimera_model} unless we design a system to be spatially homogeneous. For the latter case, we use structure preserving numerical strategies developed for gradient flow structures in the construction of a numerical scheme.

One of the transitions we are interested to investigate is the one in terms of the polarization of a particle flow. This is commonly measured using the global polar order parameter defined as
\begin{equation}
\label{eq:global_polar_order_parameter}
	R(t)e^{i\Theta(t)} = \int_\Omega e^{i\varphi}f(r,\varphi,t)\; \mathrm{d}r\mathrm{d}\varphi.
\end{equation}
Here, the absolute value $R$ gives the aforementioned measure while the phase $\Theta$ can be interpreted as a mean direction of the particle flow. If the flow is completely synchronized so that a $\varphi$-marginal of $f$ is a point mass density, the magnitude equals its maximal value $R=1$. If the flow is uniformly distributed so that $f=\text{const}$, the magnitude equals its minimal value $R=0$. For any partially synchronized solution with respect to the angular variable $\varphi$, the order parameter magnitude assumes intermediate values $R\in(0,1)$. Note that in general the right hand side of \eqref{eq:global_polar_order_parameter} must be normalized but since we consider $f$ as a probability density, the normalization term equals one. In the following, when referring to the order parameter, we will often refer to its magnitude $R$ since it provides the main structural information about the particle flow.

The global polar order parameter \eqref{eq:global_polar_order_parameter} provides a global information about the momentum field, which is not enough in a spatially inhomogeneous context. From the continuum PDE \eqref{eq:continuum_chimera_model}, we see that it is worthwhile to consider a nonlocalized version of \eqref{eq:global_polar_order_parameter} as
\begin{equation}
\label{eq:nonlocal_polar_order_parameter}
	R(r,t)e^{i\Theta(r,t)} = \frac{1}{\vert C(r;\varrho)\vert} \int_{C(r;\varrho)} e^{i\varphi}f(r,\varphi,t)\; \mathrm{d}r\mathrm{d}\varphi.
\end{equation}
One can show that in terms of such a nonlocal polar order field, the PDE \eqref{eq:continuum_chimera_model} becomes
\begin{equation*}
	\partial_t f(r,\varphi,t) = -v_0e(\varphi)\cdot\nabla_r f(r,\varphi,t) - \sigma R(r,t) \partial_\varphi \left[ \sin(\Theta(r,t)-\varphi-\alpha) f(r,\varphi,t) \right] + D_\varphi\partial_{\varphi\varphi} f(r,\varphi,t).
\end{equation*}
The presence of the magnitude $R$ in front of the angular flux emphasizes that the rotational rate of change of a particle flow is proportional to the polarization at that point.

\subsection{Spatially homogeneous formulation}

It is straightforward to check that any constant function satisfies \eqref{eq:continuum_chimera_model}. But since we are interested in probability density functions in $\Omega$ as its solutions, we find
\begin{equation}
\label{eq:uniform_density_function}
	f(r,\varphi,t) = \frac{1}{2\pi}.
\end{equation}
In terms of a particle system, this uniform density function corresponds to the chaotic behavior of the system with particles uniformly distributed in $\mathbb{U}^2$ having orientations uniformly distributed in $\mathbb{T}$. One also says that this solution represents a globally disordered or incoherent state.

In order to find solutions except for the trivial one, we note that the model \eqref{eq:continuum_chimera_model} admits a major simplification if we assume that the solutions are spatially homogeneous, i.e., $f(r,\varphi,t)=f(\varphi,t)$. Under that assumption, equation \eqref{eq:continuum_chimera_model} simplifies to a (1+1)-dimensional PDE, which we can also consider as the continuum Kuramoto-Sakaguchi model \cite{sakaguchi:ptp} with diffusion:
\begin{equation}
\label{eq:continuum_chimera_model_spatially_homogeneous}
	\partial_t f(\varphi,t) = -\partial_\varphi \left[ w[f](\varphi,t) f(\varphi,t) \right] + D_\varphi\partial_{\varphi\varphi} f(\varphi,t), \\
\end{equation}
where the nonlocal interaction term \eqref{eq:velocity_component_in_angular_direction} becomes a global one
$
	w[f](\varphi,t) = \sigma \int_\mathbb{T} f(\varphi',t)\sin(\varphi' - \varphi - \alpha)\; \mathrm{d}\varphi'
$
with the neighborhood mass omitted since $\vert C\vert = \int_\mathbb{T} f(\varphi,t)\; \mathrm{d}\varphi = 1$. We shall consider \eqref{eq:continuum_chimera_model_spatially_homogeneous} with an initial condition:
\begin{equation}
\label{eq:continuum_chimera_model_initial_condition_spatially_homogeneous}
	f(\varphi,0) = f_0(\varphi),\quad f_0(\varphi)\geq0,\quad \int_\mathbb{T} f_0(\varphi)\; \mathrm{d}\varphi = 1.
\end{equation}
Note that this equation is of a gradient form $\partial_t f = \partial_\varphi (f \partial_\varphi \xi)$ with the potential $\xi[f](\varphi,t) = -\sigma \int_{\mathbb{T}} \cos(\varphi' - \varphi - \alpha) f(\varphi',t) \;\mathrm{d}\varphi' + D_\varphi \ln f(\varphi,t)$. In the absence of rotations, i.e., $\alpha=0$, the free energy associated to equation \eqref{eq:continuum_chimera_model_spatially_homogeneous} is given by \cite{villani2003ams}
\begin{equation*}
	E[f](t) = -\frac{\sigma}{2} \int_{\mathbb{T}} (\cos * f)(\varphi,t) f(\varphi,t) \;\mathrm{d}\varphi + D_\varphi \int_{\mathbb{T}} f(\varphi,t) \ln f(\varphi,t) \;\mathrm{d}\varphi.
\end{equation*}
We can therefore represent \eqref{eq:continuum_chimera_model_spatially_homogeneous} in a general gradient flow structure $\partial_t f = \partial_\varphi \left(f \partial_\varphi \frac{\delta E[f]}{\delta f}\right)$. Moreover, one can show that this energy functional decays along solutions of \eqref{eq:continuum_chimera_model_spatially_homogeneous} according to
\begin{equation*}
	\frac{\mathrm{d} E}{\mathrm{d} t}[f](t) = - \int_{\mathbb{T}} (\partial_\varphi \xi) ^ 2 f(\varphi,t) \;\mathrm{d}\varphi.
\end{equation*}
However, for chiral interactions with $|\alpha|>0$, the interaction potential is not symmetric and we cannot write down a respective Liapunov functional. Therefore, we will not consider the free energy dissipation of the constructed numerical schemes as in \cite{carrillo2019:jcp}.

In the absence of the phase lag, i.e., $\alpha=0$, the particle model \eqref{eq:chimera_sde} and subsequently the mean-field kinetic equation \eqref{eq:continuum_chimera_model_spatially_homogeneous} can alternatively be formulated in a Cartesian representation, where particle's orientation is modeled as a unit vector in $\mathbb{S}\subset\mathbb{R}^2$ \cite{frouvelle2012siam_jma} and it is updated by projecting the mean contribution of all the neighbors into a subspace orthogonal to particle's orientation in order to keep the velocity constant in magnitude due to the phenomenology of the Vicsek model \cite{vicsek:prl}. In this representation, for sufficiently small diffusion, one observes partial synchronization which is described by a Fisher-von Mises distribution in the continuum limit \cite{frouvelle2012siam_jma}. However, when particle interactions are shifted due to the phase lag, i.e., $\alpha>0$, we cannot perform the same change of variables and obtain a Fisher-von Mises distribution shifted by $\alpha$ along the unit circle. In fact, for sufficiently small diffusion, the partial synchronization is described by a density function which is not symmetric (cf. Fig.~\ref{fig:test_case_nonstationary_phase_synchronization_1d}(a)) in terms of the direction of motion \cite{kruk2020pre}. This is the main novelty of the model under consideration as we show in Proposition \ref{prop:traveling_wave_solution}.

The Kuramoto model is well-known nowadays and it is often used to model synchronization phenomena in various systems \cite{kuramoto1984,pikovsky2003,acebron:rmp,CGPS20}. It was discovered in \cite{kuramoto2002:npcs} that the addition of a phase lag parameter $\alpha$ to the model allows one to obtain a new type of solutions, termed chimera states \cite{abrams:prl}, where both synchronized and disordered populations of oscillators coexist. We showed in \cite{kruk:aps2018} that by extending an oscillator model to a self-propelled particle model, we can obtain such chimera states with and without spatial homogeneity. It appears that for spatially homogeneous states in the continuum limit with noise, we can find a closed form expression for a corresponding density function.

We note that for spatially homogeneous systems nonlocal \eqref{eq:nonlocal_polar_order_parameter} and global \eqref{eq:global_polar_order_parameter} polar order parameters become equal
\begin{equation}
\label{eq:polar_order_parameter_homogeneous}
	R(t)e^{i\Theta(t)} = \int_\mathbb{T} e^{i\varphi}f(\varphi,t)\; \mathrm{d}\varphi
\end{equation}
and one can write the rotational velocity functional \eqref{eq:velocity_component_in_angular_direction} in terms of this polar order parameter as
\begin{equation*}
	w[f](\varphi,t) = \sigma R(t)\sin(\Theta(t)-\varphi-\alpha).
\end{equation*}

\begin{proposition}
\label{prop:traveling_wave_solution}
	The explicit solution of the IVP \eqref{eq:continuum_chimera_model_spatially_homogeneous}-\eqref{eq:continuum_chimera_model_initial_condition_spatially_homogeneous} is given in a traveling wave form $f(\varphi,t) = g(\varphi - vt) = g(\omega)$, where $v\in\mathbb{R}$ is its velocity, with the following profile:
	\begin{equation}
	\label{eq:traveling_wave_solution_profile}
		g(\omega) = c_0 \exp\left[ -\frac{v}{D_\varphi}\omega + \frac{\sigma R}{D_\varphi} \cos(\omega+\alpha) \right] 
		\left( 1 + \left( e^{2\pi\frac{v}{D_\varphi}} - 1 \right) \frac{\int_{0}^{\omega} \exp\left[ \frac{v}{D_\varphi}\omega' - \frac{\sigma R}{D_\varphi} \cos(\omega'+\alpha) \right] \mathrm{d}\omega'}{\int_\mathbb{T} \exp\left[ \frac{v}{D_\varphi}\omega' - \frac{\sigma R}{D_\varphi} \cos(\omega'+\alpha) \right] \mathrm{d}\omega'} \right).
	\end{equation}
    This profile is the solution only if $R$ and $v$ satisfy \eqref{eq:polar_order_parameter_homogeneous}.
\end{proposition}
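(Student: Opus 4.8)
The plan is to insert the traveling-wave ansatz $f(\varphi,t)=g(\varphi-vt)=g(\omega)$ into the reduced equation \eqref{eq:continuum_chimera_model_spatially_homogeneous} and to reduce it to an ordinary differential equation for the profile $g$. First I would record how the polar order parameter behaves on such an ansatz: substituting $f(\varphi,t)=g(\varphi-vt)$ into \eqref{eq:polar_order_parameter_homogeneous} and changing variables to $\omega=\varphi-vt$ gives $R(t)e^{i\Theta(t)}=e^{ivt}\int_{\mathbb{T}}e^{i\omega}g(\omega)\,\mathrm{d}\omega$, so that the magnitude $R$ is constant in time while the phase drifts uniformly, $\Theta(t)=\Theta_0+vt$. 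By the rotational equivariance of \eqref{eq:continuum_chimera_model_spatially_homogeneous} I may fix the gauge $\Theta_0=0$, after which the rotational velocity reduces to $w=\sigma R\sin(\Theta(t)-\varphi-\alpha)=-\sigma R\sin(\omega+\alpha)$, a function of $\omega$ alone.

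With this, the PDE collapses to an autonomous equation in the moving frame. Using $\partial_t f=-v g'$, $\partial_\varphi f=g'$ and $\partial_{\varphi\varphi}f=g''$, equation \eqref{eq:continuum_chimera_model_spatially_homogeneous} becomes $D_\varphi g''-\tfrac{\mathrm{d}}{\mathrm{d}\omega}(wg)+vg'=0$, whose left-hand side is an exact derivative. Integrating once yields the first-order linear equation $D_\varphi g'+[v+\sigma R\sin(\omega+\alpha)]\,g=J$, where the constant $J$ plays the role of a constant probability flux in the moving frame. I would solve this by the integrating-factor method: multiplication by $\mu(\omega)=\exp[\tfrac{v}{D_\varphi}\omega-\tfrac{\sigma R}{D_\varphi}\cos(\omega+\alpha)]$ turns the left-hand side into $(\mu g)'$, so that $\mu(\omega)g(\omega)=c_0+\tfrac{J}{D_\varphi}\int_0^{\omega}\mu(\omega')\,\mathrm{d}\omega'$ with an integration constant $c_0$. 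Dividing by $\mu$ then gives $g$ as the product of the exponential prefactor $\exp[-\tfrac{v}{D_\varphi}\omega+\tfrac{\sigma R}{D_\varphi}\cos(\omega+\alpha)]$ of \eqref{eq:traveling_wave_solution_profile} and the factor $c_0+\tfrac{J}{D_\varphi}\int_0^{\omega}\mu(\omega')\,\mathrm{d}\omega'$.

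The two free constants are pinned down by the two global requirements on $g$. The decisive structural observation is that $\mu$ is not $2\pi$-periodic but instead satisfies $\mu(\omega+2\pi)=e^{2\pi v/D_\varphi}\mu(\omega)$. Imposing periodicity $g(2\pi)=g(0)$ on the solved expression therefore forces $c_0\bigl(e^{2\pi v/D_\varphi}-1\bigr)=\tfrac{J}{D_\varphi}\int_{\mathbb{T}}\mu(\omega')\,\mathrm{d}\omega'$, which fixes $J$ in terms of $c_0$ and, upon substitution, reproduces exactly the bracketed factor of \eqref{eq:traveling_wave_solution_profile}; the mass constraint in \eqref{eq:continuum_chimera_model_initial_condition_spatially_homogeneous} then determines $c_0$. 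This shows that \eqref{eq:traveling_wave_solution_profile} is the unique $2\pi$-periodic, unit-mass solution of the reduced ODE for each prescribed pair $(R,v)$.

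Finally, the construction is genuinely a solution of \eqref{eq:continuum_chimera_model_spatially_homogeneous} only when the $R$ and drift $v$ that were inserted into $w$ coincide with those produced by the resulting profile, i.e. when $R=\int_{\mathbb{T}}e^{i\omega}g(\omega)\,\mathrm{d}\omega$ as dictated by \eqref{eq:polar_order_parameter_homogeneous}. Splitting into real and imaginary parts gives two real closure equations, the vanishing of $\int_{\mathbb{T}}\sin\omega\,g(\omega)\,\mathrm{d}\omega$ and the fixed-point relation $R=\int_{\mathbb{T}}\cos\omega\,g(\omega)\,\mathrm{d}\omega$, which together select the admissible $(R,v)$. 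I expect this closure to be the main obstacle: since $g$ depends on $(R,v)$ through nested exponentials and their integrals, the self-consistency system is transcendental and has no closed-form solution. This is acceptable for the proposition, however, since the statement is conditional -- the profile \eqref{eq:traveling_wave_solution_profile} solves the initial value problem precisely when $(R,v)$ satisfy this closure -- so it suffices to exhibit the two equations rather than to solve them.
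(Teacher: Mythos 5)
Your proposal is correct and follows essentially the same route as the paper: traveling-wave ansatz with the order-parameter phase gauged to zero by translation invariance, one integration of the resulting ODE to a constant-flux first-order equation, solution by integrating factor, and then periodicity plus normalization to fix the two constants, with the closure $(R,v)$ self-consistency condition \eqref{eq:polar_order_parameter_homogeneous} stated rather than solved. Your write-up is in fact slightly more explicit than the paper's on two points — the quasi-periodicity relation $\mu(\omega+2\pi)=e^{2\pi v/D_\varphi}\mu(\omega)$ underlying the periodicity constraint, and the identification of the integration constant as the probability flux in the moving frame — but the argument is the same.
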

\begin{proof}
	We note that in the presence of the phase lag, i.e., $\alpha > 0$, the particle flow moves uniformly either to the left or to the right depending on the sign of $\alpha$. Therefore, we are looking for solutions in the form of a traveling wave. We introduce the ansatz $f(\varphi,t) = g(\varphi-vt) = g(\omega)$, where $v$ is the speed of the traveling wave, which is unknown. After this substitution, \eqref{eq:continuum_chimera_model_spatially_homogeneous} becomes
	\begin{equation*}
		D_\varphi \frac{\mathrm{d}^2}{\mathrm{d}\omega^2}g(\omega) + \frac{\mathrm{d}}{\mathrm{d}\omega}\left\{\left[ v + \sigma R\sin(\omega+\alpha) \right] g(\omega)\right\} = 0,
	\end{equation*}
	where due to the fact that \eqref{eq:continuum_chimera_model_spatially_homogeneous} is invariant under phase translations $f(\varphi,t) \mapsto f(\varphi+\varphi_0, t) \quad \forall\varphi_0\in\mathbb{T}$, the order parameter angle can be put to zero in the traveling wave profile without loss of generality. Integrating the above equation with respect to $\omega$ yields
	\begin{equation*}
		D_\varphi \frac{\mathrm{d}}{\mathrm{d}\omega} g(\omega) + \left[ v + \sigma R\sin(\omega+\alpha) \right] g(\omega) = c_1,
	\end{equation*}
	where $c_1\in\mathbb{R}$ is some constant. Solving this equation, we find the solution to be
	\begin{equation*}
		g(\omega) = \exp\left[ -\frac{v}{D_\varphi}\omega + \frac{\sigma R}{D_\varphi} \cos(\omega+\alpha) \right] \left( c_1 \int \exp\left[ \frac{v}{D_\varphi}\omega' - \frac{\sigma R}{D_\varphi} \cos(\omega'+\alpha) \right] \mathrm{d}\omega' + c_2 \right),
	\end{equation*}
	where $c_2\in\mathbb{R}$ is some constant. One of the constants is fixed due to the periodicity constraint, i.e., $g(0) = g(2\pi)$. Namely, this implies
	\begin{equation*}
		c_2 = \exp\left( -\frac{v}{D_\varphi}2\pi \right) \left( c_1 \int_\mathbb{T} \exp\left[ \frac{v}{D_\varphi}\omega' - \frac{\sigma R}{D_\varphi} \cos(\omega'+\alpha) \right] \mathrm{d}\omega' + c_2 \right).
	\end{equation*}
	This subsequently implies
	\begin{equation*}
		c_1 = \frac{c_2 \left( \exp\left( \frac{v}{D_\varphi}2\pi \right) - 1 \right)}{\int_\mathbb{T} \exp\left[ \frac{v}{D_\varphi}\omega' - \frac{\sigma R}{D_\varphi} \cos(\omega'+\alpha) \right] \mathrm{d}\omega'}.
	\end{equation*}
	Next, due to the normalization condition $\int_\mathbb{T} g(\omega)\; \mathrm{d}\omega = 1$, we can find $c_2$ and we put $c_2=c_0$ as a normalization constant. As a result, we find \cite{gupta:jsm} the traveling wave profile \eqref{eq:traveling_wave_solution_profile}. This profile depends on the unknown order parameter magnitude $R$ and the unknown traveling wave velocity $v$, which are not arbitrary but must satisfy the compatibility condition \eqref{eq:polar_order_parameter_homogeneous}.
\end{proof}
The traveling wave profile \eqref{eq:traveling_wave_solution_profile} is a periodic skewed unimodal function with the velocity $v$ being determined by the model parameters. Its skewness coefficient depends in a nonlinear way on the phase lag $\alpha$ and the diffusion $D_\varphi$ parameters \cite{kruk2020pre}. Note that one may apply the ansatz backwards in order to obtain the complete form $f(\varphi,t)$ of the solution of \eqref{eq:continuum_chimera_model_spatially_homogeneous} but for the subsequent numerical analysis, we will use its profile solely. We refer to \cite{kruk2020pre} for other forms of this traveling wave solution.

\begin{remark}
	In the absence of the phase lag, i.e., for $\alpha=0$, the traveling wave solution \eqref{eq:traveling_wave_solution_profile} becomes stationary and it simplifies to
	\begin{equation*}
		f(\varphi) = \frac{\exp\left( \dfrac{\sigma R}{D_\varphi} \cos\varphi \right)}{2\pi I_0\left(\dfrac{\sigma R}{D_\varphi}\right)},
	\end{equation*}
	where $I_0$ denotes the modified Bessel function of the first kind of order zero \cite{olver:nist_handbook}. Moreover, the order parameter magnitude satisfies
	$
		R = I_1(\sigma R / D_\varphi) / I_0(\sigma R / D_\varphi)
	$
	and the order parameter angle $\Theta=0$ without loss of generality due to the translational invariance of \eqref{eq:continuum_chimera_model_spatially_homogeneous}.
	We refer the interested reader to \cite{carrillo2019:jcp} for the detailed analysis of such states.
\end{remark}

Regarding the remark and the emergence of global polar order, we would like to mention an alternative formulation of the Kuramoto model with noise as a self-propelled particle model \cite{grossmann2016pre}. In \cite{grossmann2016pre}, each oscillator is a random walker that carries an internal clock. The model determines the rotational motion of the clock through the interplay of alignment interaction and the Gaussian white noise, whereas the translational particle motion is described by a generic $\alpha$-stable L\'evy noise. Namely, for $\alpha=2$, particles undergo Brownian motion and for $\alpha\in(0,2)$, they perform L\'evy flights. It was demonstrated that such motile oscillators remain disordered in the continuum limit in the former case of diffusive transport but do synchronize in the latter case of superdiffusive transport. In contrast, our model describes particles whose translational motion is exclusively determined by their orientations \eqref{eq:chimera_sde}. Therefore, the ability of particles to synchronize is not affected by the processes described in \cite{grossmann2016pre} and we still observe qualitatively similar phase transitions as in the continuum Kuramoto model with noise \cite{carrillo2019:jcp}.

We next provide the condition for the order parameter magnitude to monotonically decrease when $|\alpha|>0$.
\begin{lemma}
	Let $f$ be a smooth solution to \eqref{eq:continuum_chimera_model_spatially_homogeneous}-\eqref{eq:continuum_chimera_model_initial_condition_spatially_homogeneous}. Then, the order parameter magnitude $R(t)$, defined by \eqref{eq:polar_order_parameter_homogeneous}, satisfies the following equation:
	\begin{equation*}
		\dot{R}(t) = \sigma R(t) \left( \cos\alpha \int_\mathbb{T} \sin^2(\Theta(t)-\varphi) f(\varphi,t)\; \mathrm{d}\varphi - \half\sin\alpha \int_\mathbb{T} \sin(2(\Theta(t)-\varphi)) f(\varphi,t)\; \mathrm{d}\varphi \right) - D_\varphi R(t).
	\end{equation*}
	In particular, if diffusion is higher than a threshold value $D_\varphi \geq D_\varphi^* = \sigma(\cos\alpha+\half\sin|\alpha|)$ for $|\alpha|\leq\frac{\pi}{2}$, then $\dot{R}\leq0$ for all $t\geq0$.
\end{lemma}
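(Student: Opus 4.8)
The plan is to track the complex order parameter and extract its magnitude. I would begin from $R(t)e^{i\Theta(t)} = \int_\mathbb{T} e^{i\varphi} f(\varphi,t)\,\mathrm{d}\varphi$, differentiate in $t$, and substitute the right-hand side of \eqref{eq:continuum_chimera_model_spatially_homogeneous}. Since $\mathbb{T}$ is periodic and $f$ is smooth, integration by parts carries no boundary terms: transferring $\partial_\varphi$ from the flux $w[f]f$ onto $e^{i\varphi}$ produces a factor $i$, while transferring $\partial_{\varphi\varphi}$ from $f$ onto $e^{i\varphi}$ produces a factor $(i)^2=-1$. This yields the compact identity
\begin{equation*}
	\frac{\mathrm{d}}{\mathrm{d}t}\!\left(R e^{i\Theta}\right) = i\int_\mathbb{T} e^{i\varphi}\, w[f](\varphi,t)\, f(\varphi,t)\,\mathrm{d}\varphi - D_\varphi R e^{i\Theta}.
\end{equation*}

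Next I would insert the spatially homogeneous form $w[f] = \sigma R\sin(\Theta-\varphi-\alpha)$, multiply by $e^{-i\Theta}$, and use $\frac{\mathrm{d}}{\mathrm{d}t}(Re^{i\Theta}) = (\dot R + iR\dot\Theta)e^{i\Theta}$, so that $\dot R$ is precisely the real part of the resulting expression. Setting $u := \Theta-\varphi$ and writing $e^{-iu}=\cos u - i\sin u$, the real part of $i\,e^{-iu}\sin(u-\alpha)$ equals $\sin u\,\sin(u-\alpha)$, while the diffusion term contributes $-D_\varphi R$. The product-to-sum identity
\begin{equation*}
	\sin u\,\sin(u-\alpha) = \cos\alpha\,\sin^2 u - \half\sin\alpha\,\sin(2u)
\end{equation*}
then reproduces exactly the two integral terms in the claimed formula for $\dot R(t)$.

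For the monotonicity statement I would factor the evolution law as
\begin{equation*}
	\dot R(t) = R(t)\left[\sigma\left(\cos\alpha\int_\mathbb{T}\sin^2 u\, f\,\mathrm{d}\varphi - \half\sin\alpha\int_\mathbb{T}\sin(2u)\, f\,\mathrm{d}\varphi\right) - D_\varphi\right]
\end{equation*}
and use $R(t)\ge 0$, so that it suffices to show the bracket is nonpositive. Because $f\ge 0$ with $\int_\mathbb{T} f\,\mathrm{d}\varphi = 1$, I would bound $0\le\int_\mathbb{T}\sin^2 u\, f\,\mathrm{d}\varphi\le 1$ and $\bigl|\int_\mathbb{T}\sin(2u)\, f\,\mathrm{d}\varphi\bigr|\le 1$. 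For $|\alpha|\le\pi/2$ we have $\cos\alpha\ge 0$, so the first contribution is at most $\cos\alpha$ and the second at most $\half|\sin\alpha| = \half\sin|\alpha|$; hence the bracket is at most $\sigma(\cos\alpha+\half\sin|\alpha|) - D_\varphi = D_\varphi^* - D_\varphi$, which is $\le 0$ whenever $D_\varphi\ge D_\varphi^*$.

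The computation is largely routine, so the points demanding care are bookkeeping rather than a single hard obstacle. The key structural requirement is the restriction $|\alpha|\le\pi/2$: it is exactly what guarantees $\cos\alpha\ge 0$, which lets the $\sin^2 u$ integral be bounded by its maximum without flipping the inequality. I would also remark that at instants where $R=0$ the phase $\Theta$ is undefined, but there the formula gives $\dot R = 0$ directly, so the conclusion holds trivially and the decomposition of $\frac{\mathrm{d}}{\mathrm{d}t}(Re^{i\Theta})$ is only needed on the set $\{R>0\}$.
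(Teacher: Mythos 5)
Your proof is correct and, for the main identity, follows essentially the same route as the paper: differentiate the order parameter, substitute the PDE, integrate by parts (no boundary terms on $\mathbb{T}$), and apply the product-to-sum identity $\sin u\,\sin(u-\alpha)=\cos\alpha\,\sin^2 u-\half\sin\alpha\,\sin(2u)$. The only cosmetic difference is that you carry the complex quantity $Re^{i\Theta}$ and extract $\dot R$ as a real part after multiplying by $e^{-i\Theta}$, whereas the paper works directly with the real decomposition $R=\int_\mathbb{T}\cos(\varphi-\Theta)f\,\mathrm{d}\varphi$ and kills the $\dot\Theta$ term via the constraint $\int_\mathbb{T}\sin(\varphi-\Theta)f\,\mathrm{d}\varphi=0$ --- the two bookkeeping devices are equivalent. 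One genuine addition on your side: the paper's proof stops after deriving the ODE and never spells out the threshold claim, while you supply the bounding argument ($0\le\int\sin^2 u\,f\le 1$, $\lvert\int\sin(2u)\,f\rvert\le 1$, $\cos\alpha\ge 0$ for $\lvert\alpha\rvert\le\pi/2$, and $R\ge 0$), together with the sensible remark about instants where $R=0$; this makes your write-up more complete than the published one on the second assertion.
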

\begin{proof}
	From the definition of the spatially homogeneous polar order parameter \eqref{eq:polar_order_parameter_homogeneous}, we can write
	\begin{equation*}
		R(t) = \int_\mathbb{T} \cos(\varphi-\Theta(t))f(\varphi,t)\; \mathrm{d}\varphi,\qquad \int_\mathbb{T} \sin(\varphi-\Theta(t))f(\varphi,t)\; \mathrm{d}\varphi = 0.
	\end{equation*}
	By differentiating the first equation, we find
	\begin{equation*}
	\begin{split}
		\dot{R}(t) &= \dot{\Theta}(t) \int_\mathbb{T} \sin(\varphi-\Theta(t))f(\varphi,t)\; \mathrm{d}\varphi + \int_\mathbb{T} \cos(\varphi-\Theta(t))\partial_t f(\varphi,t)\; \mathrm{d}\varphi \\
		&= \sigma R(t) \int_\mathbb{T} \sin(\Theta(t)-\varphi) \sin(\Theta(t)-\varphi-\alpha)f(\varphi,t)\; \mathrm{d}\varphi - D_\varphi R(t) \\
		&= \sigma R(t) \left( \cos\alpha \int_\mathbb{T} \sin^2(\Theta(t)-\varphi) f(\varphi,t)\; \mathrm{d}\varphi - \half\sin\alpha \int_\mathbb{T} \sin(2(\Theta(t)-\varphi)) f(\varphi,t)\; \mathrm{d}\varphi \right) - D_\varphi R(t).
	\end{split}
	\end{equation*}
\end{proof}

\section{Numerical scheme\label{sec:numerical_scheme}}

\subsection{Phase space discretization}

Our main goal is to develop a finite volume scheme that reproduces a correct behavior of nonlocally interacting particle flow governed by a 3+1 dimensional integro-differential PDE \eqref{eq:continuum_chimera_model}. In this section, we develop a numerical scheme to solve the inhomogeneous kinetic equation \eqref{eq:continuum_chimera_model} compared to \cite{carrillo:commun_comput_phys,carrillo2019:jcp} devoted to the corresponding homogeneous problem. From the numerical analysis viewpoint, we extend to the inhomogeneous kinetic setting by dimensional splitting using the method in \cite{carrillo:commun_comput_phys,carrillo2019:jcp} to deal with the angular variables and Strang splitting to couple with the spatial advection. Our focus in the next section is to utilize this finite volume scheme for the inhomogeneous problem in order to carefully study phase transitions involving skewed distributions and spatially inhomogeneous chimera states.
We quickly review in the next subsection the numerical scheme in \cite{carrillo:commun_comput_phys,carrillo2019:jcp} for the spatially homogeneous system \eqref{eq:continuum_chimera_model_spatially_homogeneous} by introducing a one-dimensional finite volume scheme, i.e., for density functions of the angular variable conveniently adapted to deal with the phase lag. We next proceed to the description of a complete three-dimensional scheme that is applied to density functions of spatial and angular variables. For both proposed methods, we prove mass and positivity preservation as well as derive CFL conditions on their stability. 

The approach we are going to pursue is the following. First, we perform a phase-space discretization of a PDE of interest, thereby deriving a semidiscrete system of ODE for finite volume cells. We derive the set of equations on uniform meshes but the generalization to nonuniform ones is straightforward. Afterwards, by noting that the dynamics of a velocity field in spatial and angular directions qualitatively differs, we attempt a dimensionality splitting technique in order to effectively cope with the dynamics changes due to spatial and angular fluxes. As a result, we obtain a FVM that is second order accurate both in time and in phase-space variables.

\subsubsection{One-dimensional scheme for spatially homogeneous PDEs}

In this section, we develop a finite volume scheme for continuum limit PDEs under the assumption of spatial homogeneity, i.e., for equation of the form \eqref{eq:continuum_chimera_model_spatially_homogeneous}. Let $\mathbb{T}_L=\{0,\dots,L-1\}$ denote a discreet one-dimensional torus with $L$ points. We divide a domain $\mathbb{T}$ into finite volume cells $C_k=[\varphi_{k-\half},\varphi_{k+\half}], k\in\mathbb{T}_L$ of a uniform length $\Delta\varphi=2\pi/L$ with the center of a cell $\varphi_k=k\Delta\varphi$, which correspond to a site $k$ in the torus $\mathbb{T}_L$. Note that since the space is periodic, we have $\varphi_k=\varphi_{k+L},k\in\mathbb{T}_L$.

We define the cell averages \cite{carrillo:commun_comput_phys} $f_k:\mathbb{T}_L\times\mathbb{R}_+ \rightarrow \mathbb{R}$ of a solution to a PDE to be
\begin{equation*}
	f_{k}(t) = \frac{1}{\Delta \varphi}\int_{C_k} f(\varphi,t) \mathrm{d}\varphi.
\end{equation*}
The cell averages $f_k$ are functions of time but for the sake of compactness, we will henceforth omit the explicit time dependence of the computed quantities.

The semidiscrete finite volume scheme is obtained by integrating the PDE \eqref{eq:continuum_chimera_model_spatially_homogeneous} over each cell $C_k,k\in\mathbb{T}_L$. It is consequently formulated as the following system of ODEs for the cell averages:
\begin{equation}
\label{eq:discretized_ode_1d}
	\frac{d}{dt}f_k = - \frac{F_{k+\half}^\varphi - F_{k-\half}^\varphi}{\Delta \varphi},
\end{equation}
where $F_{k\pm\half}^\varphi$ denote angular fluxes. Note that the right hand side of this expression is a second order centered difference of an original flux. In order to find the numerical approximations of the above fluxes at cell interfaces, we need to be able to compute the corresponding values of a solution itself as well as a velocity field. In this paper, we adopt a piecewise linear reconstruction of the numerical solution $f(\varphi,t)$ at each time point. Saying that, we represent a density function in each cell as a first order polynomial as
\begin{equation}
\label{eq:piecewise_linear_reconstruction_1d}
	\tilde{f}(\varphi,t) = f_k(t) + (\partial_\varphi f)_k^{}(\varphi-\varphi_k),\quad \varphi\in C_k,
\end{equation}
where $(\partial_\varphi f)_k^{}$ is a cell average of a partial derivative with respect to $\varphi$, which is to be determined for this reconstruction to work. The knowledge of values of a solution at neighboring cell centers allows us to approximate the slopes $(\partial_\varphi f)_k^{}$ using a second order centered difference method:
\begin{equation*}
	(\partial_\varphi f)_k^{} = \frac{f_{k+1}-f_{k-1}}{2\Delta \varphi}.
\end{equation*}
Unfortunately, it might occur that this slope approximation might lead to negative values of a reconstructed numerical solution \eqref{eq:piecewise_linear_reconstruction_1d}, which we intent to circumvent. For such cases, we recalculate the slope by imposing a slope limiter that keeps reconstructed values nonnegative. In this paper, we chose to use a generalized minmod limiter
\begin{eqnarray*}
	(\partial_\varphi f)_k^{} = \minmod\left( \theta\frac{f_{k+1}-f_k}{\Delta\varphi}, \frac{f_{k+1}-f_{k-1}}{2\Delta\varphi}, \theta\frac{f_k-f_{k-1}}{\Delta\varphi} \right),
\end{eqnarray*}
defined as follows
\begin{equation}
\label{eq:minmod_slope_limiter}
	\minmod(a,b,c)\coloneqq \begin{cases} \min(a,b,c) & a>0,b>0,c>0, \\ \max(a,b,c) & a<0,b<0,c<0, \\ 0 & \text{otherwise}. \end{cases}
\end{equation}
Note that the values, which are corrected with this slope limiter, are generally of first order. However, in all numerical tests we present in this paper, it is practically not imposed and the numerical scheme stays effectively of second order in $\Delta\varphi$.

At this point, the piecewise linear reconstruction is defined and we can apply \eqref{eq:piecewise_linear_reconstruction_1d} in the calculation of numerical fluxes, required in \eqref{eq:discretized_ode_1d}. First, we need to know the values of a solution at each cell interface. They are computed as
\begin{equation}
\label{eq:solution_at_cell_interface_1d}
	f_k^\text{T} = \tilde{f}(\varphi_{k+\half}\!-\!0) = f_k + \frac{\Delta \varphi}{2}(\partial_\varphi f)_k^{},\quad f_k^\text{B} = \tilde{f}(\varphi_{k-\half}\!+\!0) = f_k - \frac{\Delta \varphi}{2}(\partial_\varphi f)_k^{},
\end{equation}
where $f(\varphi_{k+\half}\!-\!0)$ and $f(\varphi_{k-\half}\!+\!0)$ denote function values at cell interfaces $\varphi_{k+\half}$ and $\varphi_{k-\half}$ from inside a cell $C_k$, respectively. We use the cell interface values to define the numerical fluxes in \eqref{eq:discretized_ode_1d} as upwind fluxes as
\begin{eqnarray}
\label{eq:upwind_flux_1d}
	F_{k+\half}^\varphi = w_{k+\half}^+ f_k^\text{T} + w_{k+\half}^- f_{k+1}^\text{B},
\end{eqnarray}
where positive and negative parts of angular velocities are denoted by
\begin{equation}
\label{eq:positive_negative_part_of_velocity}
	w_{k+\half}^+ = \max(w_{k+\half},0),\quad w_{k+\half}^- = \min(w_{k+\half},0).
\end{equation}
The exact velocities themselves are given in the PDE \eqref{eq:continuum_chimera_model_spatially_homogeneous} but need to be numerically approximated at cell interfaces. We note that the PDE is of a gradient flow structure, i.e., we can write $w[f](\varphi,t)=-\partial_\varphi\xi[f](\varphi,t)$ for some potential function $\xi$, as we showed in \eqref{eq:gradient_flow_form_for_angular_components}, and we use this fact to define velocities at cell interfaces using a second order centered difference method as
\begin{equation}
\label{eq:velocity_approximation_at_cell_interface_1d}
	w_{k+\half} = -\frac{\xi_{k+1} - \xi_{k}}{\Delta\varphi}.
\end{equation}
The velocity potential for a homogeneous system is defined as
\begin{equation*}
	\xi[f](\varphi,t) = -\sigma \int_\mathbb{T} f(\varphi',t)\cos(\varphi'-\varphi-\alpha) \mathrm{d}\varphi' + D_\varphi\ln f(\varphi,t),
\end{equation*} 
therefore, its numerical approximation proceeds as follows
\begin{equation*}
\begin{split}
	&\int_\mathbb{T} f(\varphi') \cos(\varphi'-\varphi-\alpha) \mathrm{d}\varphi' = \sum_{n\in\mathbb{T}_L}\int_{C_n} [f_n + (\partial_\varphi f)_n(\varphi'-\varphi_n)] \cos(\varphi'-\varphi_k-\alpha) \mathrm{d}\varphi' \\
	&= \sum_{n\in\mathbb{T}_L} f_n \int_{\varphi_{n-\half}}^{\varphi_{n+\half}} \cos(\varphi'-\varphi_k-\alpha) \mathrm{d}\varphi' + \sum_{n\in\mathbb{T}_L} (\partial_\varphi f)_n^{} \int_{\varphi_{n-\half}}^{\varphi_{n+\half}} (\varphi'-\varphi_n)\cos(\varphi'-\varphi_k-\alpha) \mathrm{d}\varphi' \\
	&= \sum_{n\in\mathbb{T}_L} f_{n} \left(2\sin\frac{\Delta\varphi}{2}\right) \cos(\varphi_n-\varphi_k-\alpha) + \sum_{n\in\mathbb{T}_L} (\partial_\varphi f)_n^{} \left(\Delta\varphi \cos\frac{\Delta\varphi}{2} - 2\sin\frac{\Delta\varphi}{2}\right) \sin(\varphi_n-\varphi_k-\alpha).
\end{split}
\end{equation*}
As a result, we have obtained the following representation of an approximated velocity potential, which is to be used in \eqref{eq:velocity_approximation_at_cell_interface_1d}, as
\begin{equation*}
\begin{split}
	\xi_k = - \frac{\sigma}{\sum_{n\in\mathbb{T}_L} f_n} \sum_{n\in\mathbb{T}_L}&\left[ f_n \left(\frac{\sin\frac{\Delta\varphi}{2}}{\frac{\Delta\varphi}{2}}\right) \cos(\varphi_n - \varphi_k - \alpha) \right.\\
	&\left.+ (\partial_\varphi f)_n^{} \sin(\varphi_n - \varphi_k - \alpha) \left(\cos\frac{\Delta\varphi}{2} - \frac{\sin\frac{\Delta\varphi}{2}}{\frac{\Delta\varphi}{2}}\right) \right] + D_\varphi\ln f_k,
\end{split}
\end{equation*}
where we have used the fact that since $f$ is a probability density function, its piecewise linear reconstruction yields $\int_\mathbb{T} \tilde{f}(\varphi,t) \mathrm{d}\varphi = \sum_{n\in\mathbb{T}_L} f_n\Delta\varphi$. We note that the above approximation of the velocity potential is exact in $\Delta\varphi$ given a piecewise linear reconstruction of a density function.

\begin{theorem}
\label{thm:cfl_1d}
	Consider the IVP \eqref{eq:continuum_chimera_model_spatially_homogeneous}-\eqref{eq:continuum_chimera_model_initial_condition_spatially_homogeneous} with periodic boundaries and the semidiscrete FVM \eqref{eq:discretized_ode_1d} with a positivity-preserving piecewise linear reconstruction \eqref{eq:piecewise_linear_reconstruction_1d}. Assume that the system of ODEs \eqref{eq:discretized_ode_1d} is discretized by the forward Euler method or by a higher-order strong stability preserving (SSP) ODE solver, whose time step can be expressed as a convex combination of several forward Euler steps. Then, computed cell averages remain nonnegative $f_k(t)\geq0\; \forall k\in\mathbb{T}_L\; \forall t>0$, provided that the following CFL condition is satisfied:
	\begin{equation*}
		\Delta t \leq \frac{\Delta \varphi}{2c},
	\end{equation*}
	where $c=\max\limits_{k\in\mathbb{T}_L}\left\{ w_{k+\half}^+,-w_{k+\half}^- \right\}$ and the velocities at cell interfaces are defined in \eqref{eq:velocity_approximation_at_cell_interface_1d}.
\end{theorem}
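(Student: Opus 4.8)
The plan is to prove positivity for a single forward Euler step; the general case then follows at once, because by hypothesis any admissible SSP step is a convex combination of forward Euler updates, and a convex combination of nonnegative quantities is nonnegative. So I would begin from the forward Euler discretization of \eqref{eq:discretized_ode_1d},
\[
	f_k^{n+1} = f_k - \frac{\Delta t}{\Delta \varphi}\left( F_{k+\half}^\varphi - F_{k-\half}^\varphi \right),
\]
and substitute the upwind fluxes \eqref{eq:upwind_flux_1d} at both interfaces, using $F_{k-\half}^\varphi = w_{k-\half}^+ f_{k-1}^\text{T} + w_{k-\half}^- f_k^\text{B}$.

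The structural observation that does the real work is that the piecewise linear reconstruction \eqref{eq:solution_at_cell_interface_1d} yields $f_k = \half\left( f_k^\text{T} + f_k^\text{B} \right)$, since the two slope contributions cancel. I would use this identity to split the old cell average evenly between the two one-sided interface values of cell $C_k$. After expanding the fluxes, the update then reads as a linear combination of the four nonnegative reconstructed interface values $f_k^\text{T}$, $f_k^\text{B}$, $f_{k-1}^\text{T}$, and $f_{k+1}^\text{B}$, namely with coefficients $\half - \frac{\Delta t}{\Delta\varphi}w_{k+\half}^+$ on $f_k^\text{T}$, $\half + \frac{\Delta t}{\Delta\varphi}w_{k-\half}^-$ on $f_k^\text{B}$, together with $-\frac{\Delta t}{\Delta\varphi}w_{k+\half}^-$ on $f_{k+1}^\text{B}$ and $\frac{\Delta t}{\Delta\varphi}w_{k-\half}^+$ on $f_{k-1}^\text{T}$.

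Next I would read off the sign of each coefficient. The last two are automatically nonnegative by the upwind splitting \eqref{eq:positive_negative_part_of_velocity}, since they carry the factors $-w_{k+\half}^-\geq0$ and $w_{k-\half}^+\geq0$. For the first two, nonnegativity of $\half - \frac{\Delta t}{\Delta\varphi}w_{k+\half}^+$ requires $\Delta t \leq \Delta\varphi/(2 w_{k+\half}^+)$, and nonnegativity of $\half + \frac{\Delta t}{\Delta\varphi}w_{k-\half}^-$ requires $\Delta t \leq \Delta\varphi/\bigl(2(-w_{k-\half}^-)\bigr)$; taking the worst case over all interfaces gives exactly $\Delta t \leq \Delta\varphi/(2c)$ with $c=\max_{k\in\mathbb{T}_L}\{ w_{k+\half}^+, -w_{k+\half}^- \}$. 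Because the reconstruction is positivity preserving (guaranteed by the minmod limiter \eqref{eq:minmod_slope_limiter}), all four interface values are nonnegative, so under the CFL bound $f_k^{n+1}$ is a nonnegative combination of nonnegative terms, hence $f_k^{n+1}\geq0$.

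The genuinely delicate point is the splitting identity $f_k=\half(f_k^\text{T}+f_k^\text{B})$ together with the careful bookkeeping of which velocity sign attaches to which one-sided interface value; once the update is written in the manifestly nonnegative form above, the CFL condition simply reads off from requiring the two self-coefficients to be nonnegative. I would emphasize that positivity of the interface values themselves is not reproved here but is precisely the hypothesis of a positivity-preserving reconstruction, so the argument reduces entirely to the coefficient sign analysis.
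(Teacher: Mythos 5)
Your proposal is correct and follows essentially the same route as the paper's proof: forward Euler update, the splitting identity $f_k=\half(f_k^\text{T}+f_k^\text{B})$, substitution of the upwind fluxes, and reading the CFL condition off the signs of the coefficients on $f_k^\text{T}$ and $f_k^\text{B}$, with the remaining two terms nonnegative by the velocity sign splitting. Your explicit remarks on the SSP convex-combination reduction and on positivity of the interface values being supplied by the limiter only make the paper's implicit steps more transparent.
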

\begin{proof}
	According to the forward Euler method, we discretize \eqref{eq:discretized_ode_1d} as
	\begin{equation*}
		f_k(t+\Delta t) = f_k(t) - \frac{\Delta t}{\Delta\varphi} \left( F_{k+\half}^\varphi - F_{k-\half}^\varphi \right).
	\end{equation*}
	We note that we can express cell averages of a solution as a linear combination of corresponding values at cell interfaces, defined in \eqref{eq:solution_at_cell_interface_1d}, as
	\begin{equation*}
		f_k = \half \left( f_k^\text{T} + f_k^\text{B} \right).
	\end{equation*}
	Using this fact and expressing numerical fluxes as upwind fluxes introduced in \eqref{eq:upwind_flux_1d}, we have
	\begin{equation*}
		f_k(t+\Delta t) = \half \left( f_k^\text{T} + f_k^\text{B} \right) - \frac{\Delta t}{\Delta\varphi} \left( w_{k+\half}^+ f_k^\text{T} + w_{k+\half}^- f_{k+1}^\text{B} - w_{k-\half}^+ f_{k-1}^\text{T} - w_{k-\half}^- f_k^\text{B} \right).
	\end{equation*}
	Now we group the terms according to cell interface values and find
	\begin{equation*}
		f_k(t+\Delta t) = \left(\half-\frac{\Delta t}{\Delta\varphi}w_{k+\half}^+\right) f_k^\text{T} + \left(\half+\frac{\Delta t}{\Delta\varphi}w_{k-\half}^-\right) f_k^\text{B} + \frac{\Delta t}{\Delta\varphi}w_{k-\half}^+ f_{k-1}^\text{T} - \frac{\Delta t}{\Delta\varphi}w_{k+\half}^- f_{k+1}^\text{B}.
	\end{equation*}
	The last two terms are always nonnegative. To guarantee positivity preservation of $f_k(t+\Delta t)$, we must require that the values in parentheses of the first two terms remain nonnegative. This yields the following conditions:
	\begin{equation*}
		\frac{\Delta t}{\Delta\varphi}w_{k+\half}^+ \leq \half,\quad \frac{\Delta t}{\Delta\varphi}w_{k-\half}^- \geq -\half,
	\end{equation*}
	the combination of which gives the desired CFL condition.
\end{proof}

\begin{remark}
We indicate that it was proved in \cite{carrillo:commun_comput_phys} that the one-dimensional numerical scheme considered so far preserves the conservation of mass and provides the decay of discrete free energy for systems with symmetric interaction potential. It also has a very good property: numerical steady states are characterized by $\xi_k = constant$ for all $k$, as in the continuum setting, due to \eqref{eq:velocity_approximation_at_cell_interface_1d}.
\end{remark}

Numerical studies of spatially homogeneous PDEs \eqref{eq:continuum_chimera_model_spatially_homogeneous} using the finite volume scheme of this section are conducted in Sections \ref{sec:stationary_phase_synchronization_1d}-\ref{sec:nonstationary_phase_synchronization_3d}. In the following, we generalize the scheme for general spatially inhomogeneous three-dimensional PDEs \eqref{eq:continuum_chimera_model}.

\subsubsection{Three-dimensional scheme for spatially inhomogeneous PDEs}

We start from the discretization of a phase space $\Omega$ into finite volume cells. Dimensions corresponding to $x$, $y$, and $\varphi$ variables are divided into $N$, $M$, and $L$ cells, respectively. Linear sizes of cells are $\Delta x=1/N$, $\Delta y=1/M$, and $\Delta\varphi=2\pi/L$. Let $\mathbb{U}_N$, $\mathbb{U}_M$, and $\mathbb{T}_L$ denote discreet one-dimensional tori with $N, M, L\in\mathbb{N}$ points, respectively, i.e., $\mathbb{U}_N = \left\{ 0,\dots,N-1 \right\}$, $\mathbb{U}_M = \left\{ 0,\dots,M-1 \right\}$, $\mathbb{T}_L = \left\{ 0,\dots,L-1 \right\}$. We define a uniform grid consisting of cells $C_{i,j,k}=[x_{i-\half},x_{i+\half}]\times[y_{j-\half},y_{j+\half}]\times[\varphi_{k-\half},\varphi_{k+\half}]$ with cell centers $(x_i,y_j,\varphi_k) = (i\Delta x,j\Delta y,k\Delta\varphi), i\in\mathbb{U}_N, j\in\mathbb{U}_M, k\in\mathbb{T}_L$. Due to the periodic boundaries, we have $x_{N+i}=x_i$, $y_{M+j}=y_j$, and $\varphi_{L+k}=\varphi_k$.

The discretization of $\Omega$ consists of three-dimensional cells which can be enumerated with a three-dimensional torus as
\begin{equation*}
	\Omega_{N,M,L} = \left\{ (i,j,k) \mid i\in\mathbb{U}_N, j\in\mathbb{U}_M, k\in\mathbb{T}_L \right\}.
\end{equation*}
Sites of the torus $(i,j,k)\in\Omega_{N,M,L}$ correspond to the points $(x_i,y_j,\varphi_k)=(i\Delta x,j\Delta y,k\Delta\varphi)\in\Omega$ of the original space. Points of the original space $(x,y,\varphi)\in\Omega$ correspond to the sites $([x/\Delta x],[y/\Delta y],[\varphi/\Delta\varphi])\in\Omega_{N,M,L}$ of the torus, where $[]$ stands for the integer part.

We define cell averages \cite{carrillo:commun_comput_phys} $f_{i,j,k}: \Omega_{N,M,L}\times\mathbb{R}_+ \rightarrow \mathbb{R}$ of solutions to PDEs \eqref{eq:continuum_chimera_model} to be
\begin{equation*}
	f_{i,j,k}(t) = \frac{1}{\Delta x \Delta y \Delta \varphi}\iiint_{C_{i,j,k}} f(x,y,\varphi,t) \mathrm{d}x\mathrm{d}y\mathrm{d}\varphi.
\end{equation*}
As before, for the sake of compactness, we will omit the dependence of most computed quantities on time $t$ henceforth.

The semidiscrete finite volume scheme for a three-dimensional system is obtained by integrating the PDE \eqref{eq:continuum_chimera_model} over each cell $C_{i,j,k},(i,j,k)\in\Omega_{N,M,L}$ and is formulated by the following system of ODEs for $f_{i,j,k}$
\begin{equation}
\label{eq:discretized_ode_3d}
	\frac{d}{dt}f_{i,j,k} = - \frac{F_{i+\half,j,k}^x - F_{i-\half,j,k}^x}{\Delta x} - \frac{F_{i,j+\half,k}^y - F_{i,j-\half,k}^y}{\Delta y} - \frac{F_{i,j,k+\half}^\varphi - F_{i,j,k-\half}^\varphi}{\Delta \varphi},
\end{equation}
for $i\in\mathbb{U}_N$, $j\in\mathbb{U}_M$, and $k\in\mathbb{T}_L$.

In order to define the above fluxes, we extend the same piecewise linear reconstruction method from the previous section. The numerical solution in each cell $C_{i,j,k}$ is thus approximated as a polynomial
\begin{equation}
\label{eq:piecewise_linear_reconstruction_3d}
	\tilde{f}(x,y,\varphi) = f_{i,j,k} + (\partial_x f)_{i,j,k}^{}(x-x_i) + (\partial_y f)_{i,j,k}^{}(y-y_j) + (\partial_\varphi f)_{i,j,k}^{}(\varphi-\varphi_k),\quad (x,y,\varphi)\in C_{i,j,k}.
\end{equation}
To be able to use this representation, we need to find each slope $(\partial_x f)_{i,j,k}^{}$, $(\partial_y f)_{i,j,k}^{}$, and $(\partial_\varphi)_{i,j,k}^{}$. To ensure that the solution is second-order accurate, we define the slopes using the centered difference approximations
\begin{equation*}
	(\partial_x f)_{i,j,k}^{} = \frac{f_{i+1,j,k}-f_{i-1,j,k}}{2\Delta x},\quad 
	(\partial_y f)_{i,j,k}^{} = \frac{f_{i,j+1,k}-f_{i,j-1,k}}{2\Delta y},\quad 
	(\partial_\varphi f)_{i,j,k}^{} = \frac{f_{i,j,k+1}-f_{i,j,k-1}}{2\Delta \varphi}.
\end{equation*}
It might occur that a reconstructed solution becomes negative in some cell $C_{i,j,k}$. In such cases, we recalculate a corresponding slope using a slope limiter. In this paper, we use a generalized minmod limiter \eqref{eq:minmod_slope_limiter}, whose application yields
\begin{equation*}
\begin{split}
	(\partial_x f)_{i,j,k}^{} = \minmod\left( \theta\frac{f_{i+1,j,k}-f_{i,j,k}}{\Delta x}, \frac{f_{i+1,j,k}-f_{i-1,j,k}}{2\Delta x}, \theta\frac{f_{i,j,k}-f_{i-1,j,k}}{\Delta x} \right), \\
	(\partial_y f)_{i,j,k}^{} = \minmod\left( \theta\frac{f_{i,j+1,k}-f_{i,j,k}}{\Delta y}, \frac{f_{i,j+1,k}-f_{i,j-1,k}}{2\Delta y}, \theta\frac{f_{i,j,k}-f_{i,j-1,k}}{\Delta y} \right), \\
	(\partial_\varphi f)_{i,j,k}^{} = \minmod\left( \theta\frac{f_{i,j,k+1}-f_{i,j,k}}{\Delta\varphi}, \frac{f_{i,j,k+1}-f_{i,j,k-1}}{2\Delta\varphi}, \theta\frac{f_{i,j,k}-f_{i,j,k-1}}{\Delta\varphi} \right),
\end{split}
\end{equation*}
We note again that the values, which are corrected with such a limiter, become of first order in a corresponding dimension. But since the number of such points is usually small compared to the total number of grid points, the overall order of the scheme is effectively not reduced. In the numerical tests of this paper, these slope limiters have practically not been triggered.

Now that the piecewise linear reconstruction \eqref{eq:piecewise_linear_reconstruction_3d} is completely determined, we are able to compute solution values at each cell interface the following way:
\begin{equation}
\begin{split}
\label{eq:solution_at_cell_interface_3d}
	f_{i,j,k}^\text{E} = \tilde{f}(x_{i+\half}\!-\!0,y_j,\varphi_k) = f_{i,j,k} + \frac{\Delta x}{2}(\partial_x f)_{i,j,k}^{},\quad f_{i,j,k}^\text{W} = \tilde{f}(x_{i-\half}\!+\!0,y_j,\varphi_k) = f_{i,j,k} - \frac{\Delta x}{2}(\partial_x f)_{i,j,k}^{}, \\
	f_{i,j,k}^\text{N} = \tilde{f}(x_i,y_{j+\half}\!-\!0,\varphi_k) = f_{i,j,k} + \frac{\Delta y}{2}(\partial_y f)_{i,j,k}^{},\quad f_{i,j,k}^\text{S} = \tilde{f}(x_i,y_{j-\half}\!+\!0,\varphi_k) = f_{i,j,k} - \frac{\Delta y}{2}(\partial_y f)_{i,j,k}^{}, \\
	f_{i,j,k}^\text{T} = \tilde{f}(x_i,y_j,\varphi_{k+\half}\!-\!0) = f_{i,j,k} + \frac{\Delta \varphi}{2}(\partial_\varphi f)_{i,j,k}^{},\quad f_{i,j,k}^\text{B} = \tilde{f}(x_i,y_j,\varphi_{k-\half}\!+\!0) = f_{i,j,k} - \frac{\Delta \varphi}{2}(\partial_\varphi f)_{i,j,k}^{},
\end{split}
\end{equation}
where $\tilde{f}(x_{i\pm\half}\pm0,y_j,\varphi_k)$, $\tilde{f}(x_i,y_{j\pm\half}\pm0,\varphi_k)$, and $\tilde{f}(x_i,y_j,\varphi_{k\pm\half}\pm0)$ denote reconstructed solution values at cell interfaces from inside the current cell.

We compute all fluxes in the semidiscrete system of ODEs \eqref{eq:discretized_ode_3d} as upwind fluxes as
\begin{equation}
\begin{split}
\label{eq:upwind_flux_3d}
	F_{i+\half,j,k}^x = u_{i+\half,j,k}^+ f_{i,j,k}^\text{E} + u_{i+\half,j,k}^- f_{i+1,j,k}^\text{W}, \\
	F_{i,j+\half,k}^y = v_{i,j+\half,k}^+ f_{i,j,k}^\text{N} + v_{i,j+\half,k}^- f_{i,j+1,k}^\text{S}, \\
	F_{i,j,k+\half}^\varphi = w_{i,j,k+\half}^+ f_{i,j,k}^\text{T} + w_{i,j,k+\half}^- f_{i,j,k+1}^\text{B},
\end{split}
\end{equation}
where $u_{i+\frac{1}{2},j,k}^\pm$, $v_{i,j+\frac{1}{2},k}^{\pm}$, and $w_{i,j,k+\frac{1}{2}}^{\pm}$ denote positive and negative parts of velocities defined as before according to \eqref{eq:positive_negative_part_of_velocity}. The definition of upwind fluxes required the knowledge of velocities at cell interfaces. For advection in spatial directions, they are directly obtained from the PDE \eqref{eq:continuum_chimera_model} by direct substitution of grid points:
\begin{equation}
\label{eq:cell_interface_velocities_xy}
	u_{i+\frac{1}{2},j,k}=u(x_{i+\frac{1}{2}},y_j,\varphi_k) = \cos\varphi_k,\quad v_{i,j+\frac{1}{2},k}=v(x_i,y_{j+\frac{1}{2}},\varphi_k) = \sin\varphi_k.
\end{equation}
For the angular direction, we again use the gradient flow structure of the angular subflow and consider the last two terms in the PDE \eqref{eq:continuum_chimera_model} as defined by a velocity potential $\xi[f](r,\varphi,t)$ \eqref{eq:gradient_flow_form_for_angular_components}. One can show, that for a spatially nonhomogenous system, it reads
\begin{equation}
\label{eq:angular_velocity_potential_3d}
	\xi[f](r,\varphi,t) = -\sigma \frac{\iiint_{C(r;\varrho)} f(r',\varphi',t)\cos(\varphi'-\varphi-\alpha) \mathrm{d}r'\mathrm{d}\varphi'}{\iiint_{C(r;\varrho)} f(r',\varphi',t) \mathrm{d}r'\mathrm{d}\varphi'} + D_\varphi\ln f(r,\varphi,t).
\end{equation} 
We thus determine the angular velocity at cell interfaces from its potential using a second order centered difference scheme
\begin{equation}
\label{eq:cell_interface_velocities_phi}
	w_{i,j,k+\half} = -\frac{\xi_{i,j,k+1}-\xi_{i,j,k}}{\Delta\varphi}.
\end{equation}

Now, we need to perform the discretization of the potential \eqref{eq:angular_velocity_potential_3d} using the piecewise linear reconstruction of the solution \eqref{eq:piecewise_linear_reconstruction_3d}. First, the numerator for $\xi_{i,j,k}$ is calculated as
\begin{equation*}
\begin{split}
	\iiint_{C(r_{i,j};\varrho)} &\tilde{f}(r',\varphi') \cos(\varphi'-\varphi_k-\alpha) \mathrm{d}r'\mathrm{d}\varphi' =  \\
	&= \sum_{(l,m,n)\in C_{N,M,L}(r_{i,j};\varrho)}\iiint_{C_{l,m,n}} [f_{l,m,n} + (\partial_x f)_{l,m,n}(x'-x_l) + (\partial_y f)_{l,m,n}(y'-y_m) \\
	&\qquad\qquad+ (\partial_\varphi f)_{l,m,n}(\varphi'-\varphi_n)] \cos(\varphi'-\varphi_k-\alpha) \mathrm{d}x'\mathrm{d}y'\mathrm{d}\varphi' \\
	&= \sum_{(l,m,n)\in C_{N,M,L}(r_{i,j};\varrho)} f_{l,m,n}\Delta x\Delta y \int_{\varphi_{n-\frac{1}{2}}}^{\varphi_{n+\frac{1}{2}}} \cos(\varphi'-\varphi_k-\alpha) \mathrm{d}\varphi' \\
	&\qquad\qquad+ \sum_{(l,m,n)\in C_{N,M,L}(r_{i,j};\varrho)} (\partial_\varphi f)_{l,m,n}\Delta x\Delta y \int_{\varphi_{n-\frac{1}{2}}}^{\varphi_{n+\frac{1}{2}}} (\varphi'-\varphi_n)\cos(\varphi'-\varphi_k-\alpha) \mathrm{d}\varphi' \\
	&= \sum_{(l,m,n)\in C_{N,M,L}(r_{i,j};\varrho)} f_{l,m,n} \Delta x\Delta y \left(2\sin\frac{\Delta\varphi}{2}\right)  \cos(\varphi_n-\varphi_k-\alpha) \\
	&\qquad\qquad+ \sum_{(l,m,n)\in C_{N,M,L}(r_{i,j};\varrho)} (\partial_\varphi f)_{l,m,n} \Delta x\Delta y \left(\Delta\varphi \cos\frac{\Delta\varphi}{2} - 2\sin\frac{\Delta\varphi}{2}\right) \sin(\varphi_n-\varphi_k-\alpha),
\end{split}
\end{equation*}
where $r_{i,j}=(x_i,y_j)$ and summations run over $C_{N,M,L}(r_{i,j};\varrho) = \{ (l,m,n)\in\Omega_{N,M,L} \mid (x_i-x_l)^2 +(y_j-y_m)^2 \leq \varrho^2 \}$. Second, the denominator takes the form
\begin{equation*}
\begin{split}
	\iiint_{C(r_{i,j};\varrho)} f(r',\varphi') \mathrm{d}r'\mathrm{d}\varphi' &= \sum_{(l,m,n)\in C_{N,M,L}(r_{i,j};\varrho)} \iiint_{C_{l,m,n}} f(r',\varphi') \mathrm{d}r'\mathrm{d}\varphi' \\
	&= \sum_{(l,m,n)\in C_{N,M,L}(r_{i,j};\varrho)} f_{l,m,n} \Delta x\Delta y\Delta\varphi.
\end{split}
\end{equation*} 
As a result, we find the discretized velocity potential $\xi_{i,j,k}$, which is to be used in \eqref{eq:cell_interface_velocities_phi}, to read
\begin{equation*}
\begin{split}
	\xi_{i,j,k} &= - \frac{\sigma}{\sum_{(l,m,n)\in C_{N,M,L}(r_{i,j};\varrho)} f_{l,m,n}} \sum_{(l,m,n)\in C_{N,M,L}(r_{i,j};\varrho)}\Biggl[ f_{l,m,n} \left( \frac{\sin\frac{\Delta\varphi}{2}}{\frac{\Delta\varphi}{2}} \right) \cos(\varphi_n - \varphi_k - \alpha) \\
	&+ (\partial_\varphi f)_{l,m,n} \sin(\varphi_n - \varphi_k - \alpha) \left( \cos\frac{\Delta\varphi}{2} - \frac{\sin\frac{\Delta\varphi}{2}}{\frac{\Delta\varphi}{2}} \right)\Biggr] + D_\varphi\ln f_{i,j,k},
\end{split}
\end{equation*}
which is exact in $\Delta x$, $\Delta y$, and $\Delta\varphi$. 

\begin{theorem}
	Consider the IVP \eqref{eq:continuum_chimera_model}-\eqref{eq:continuum_chimera_model_initial_condition} with periodic boundaries and the semidiscrete FVM \eqref{eq:discretized_ode_3d} with a positivity-preserving piecewise linear reconstruction \eqref{eq:piecewise_linear_reconstruction_3d}. Assume that the system of ODEs \eqref{eq:discretized_ode_3d} is discretized by the forward Euler method or by a higher-order SSP ODE solver, whose time step can be expressed as a convex combination of several forward Euler steps. Then, computed cell averages remain nonnegative $f_{i,j,k}\geq0\; \forall i\in\mathbb{U}_N,\forall j\in\mathbb{U}_M,\forall k\in\mathbb{T}_L, \forall t>0$, provided that the following CFL condition is satisfied:
	\begin{eqnarray*}
		\Delta t \leq \min\left\{ \frac{\Delta x}{6a}, \frac{\Delta y}{6b}, \frac{\Delta \varphi}{6c} \right\}
	\end{eqnarray*}
	with the coefficients $a=\max\limits_{(i,j,k)\in\Omega_{N,M,L}}\left\{ u_{i+\frac{1}{2},j,k}^+,-u_{i+\frac{1}{2},j,k}^- \right\}$, $b=\max\limits_{(i,j,k)\in\Omega_{N,M,L}}\left\{ v_{i,j+\frac{1}{2},k}^+,-v_{i,j+\frac{1}{2},k}^- \right\}$, $c=\max\limits_{(i,j,k)\in\Omega_{N,M,L}}\left\{ w_{i,j,k+\frac{1}{2}}^+,-w_{i,j,k+\frac{1}{2}}^- \right\}$ and the velocities at cell interfaces are defined in \eqref{eq:cell_interface_velocities_xy},\eqref{eq:cell_interface_velocities_phi}.
	
	Moreover, the mass of the discretized system is conserved, i.e.,
	\begin{equation*}
		\frac{d}{dt} \int_\Omega f(r,\varphi,t)\; \mathrm{d}r\mathrm{d}\varphi = 0.
	\end{equation*}
\end{theorem}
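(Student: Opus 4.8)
The plan is to mirror the one-dimensional argument of Theorem~\ref{thm:cfl_1d}, distributing the positivity budget across the three directions. I would start from the forward Euler discretization of \eqref{eq:discretized_ode_3d},
\begin{equation*}
	f_{i,j,k}(t+\Delta t) = f_{i,j,k} - \frac{\Delta t}{\Delta x}\bigl(F_{i+\half,j,k}^x - F_{i-\half,j,k}^x\bigr) - \frac{\Delta t}{\Delta y}\bigl(F_{i,j+\half,k}^y - F_{i,j-\half,k}^y\bigr) - \frac{\Delta t}{\Delta\varphi}\bigl(F_{i,j,k+\half}^\varphi - F_{i,j,k-\half}^\varphi\bigr).
\end{equation*}
The key structural observation, which generalizes the identity $f_k = \half(f_k^\text{T}+f_k^\text{B})$, is that the piecewise linear reconstruction \eqref{eq:piecewise_linear_reconstruction_3d} forces each opposite pair of interface values \eqref{eq:solution_at_cell_interface_3d} to average back to the cell mean, i.e. $f_{i,j,k}^\text{E}+f_{i,j,k}^\text{W}=f_{i,j,k}^\text{N}+f_{i,j,k}^\text{S}=f_{i,j,k}^\text{T}+f_{i,j,k}^\text{B}=2f_{i,j,k}$, so that $f_{i,j,k} = \frac{1}{6}\bigl(f_{i,j,k}^\text{E}+f_{i,j,k}^\text{W}+f_{i,j,k}^\text{N}+f_{i,j,k}^\text{S}+f_{i,j,k}^\text{T}+f_{i,j,k}^\text{B}\bigr)$. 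This convex splitting into six equal parts is exactly what produces the $6a$, $6b$, $6c$ denominators in the claimed CFL bound.

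Substituting this decomposition together with the upwind fluxes \eqref{eq:upwind_flux_3d}, I would regroup the update by interface values. The terms attached to the current cell's own interface values acquire coefficients $\frac{1}{6}-\frac{\Delta t}{\Delta x}u_{i+\half,j,k}^+$ and $\frac{1}{6}+\frac{\Delta t}{\Delta x}u_{i-\half,j,k}^-$, together with the analogous expressions in $y$ and $\varphi$, while the terms inherited from neighboring cells (carrying $f_{i+1,j,k}^\text{W}$, $f_{i-1,j,k}^\text{E}$, and so on) always appear with coefficients of the form $-\frac{\Delta t}{\Delta x}u^-\ge 0$ or $+\frac{\Delta t}{\Delta x}u^+\ge 0$ and are therefore automatically nonnegative. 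Since the reconstruction is positivity-preserving, every interface value is nonnegative, so nonnegativity of $f_{i,j,k}(t+\Delta t)$ follows once the six bracketed coefficients are nonnegative; requiring $\frac{\Delta t}{\Delta x}u^+\le\frac16$, $\frac{\Delta t}{\Delta x}(-u^-)\le\frac16$ and their $y$, $\varphi$ counterparts yields precisely $\Delta t\le\min\{\Delta x/6a,\ \Delta y/6b,\ \Delta\varphi/6c\}$. The extension to a higher-order SSP solver is then immediate, as such a step is a convex combination of forward Euler steps and convex combinations of nonnegative quantities stay nonnegative.

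For mass conservation I would argue at the semidiscrete level and sum the ODEs \eqref{eq:discretized_ode_3d} over all cells, using that the discrete mass equals $\sum_{i,j,k} f_{i,j,k}\,\Delta x\,\Delta y\,\Delta\varphi$. Each directional flux difference telescopes over the corresponding periodic torus $\mathbb{U}_N$, $\mathbb{U}_M$, or $\mathbb{T}_L$, so every flux sum vanishes and $\frac{d}{dt}\sum_{i,j,k} f_{i,j,k}=0$; passing back to the continuum integral through the cell-average definition delivers the stated identity.

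I expect the only genuine subtlety to be the bookkeeping of the regrouping in all three directions simultaneously: one must check that each neighbor contribution falls into the correct sign class and that the current-cell coefficients are exactly the six listed above, so that the $\frac16$ splitting is tight and no slack is wasted. The telescoping and SSP arguments are routine once the one-dimensional template of Theorem~\ref{thm:cfl_1d} is in place.
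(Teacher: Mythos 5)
Your proposal is correct and takes essentially the same route as the paper: the paper's proof likewise reduces to the one-dimensional argument of Theorem~\ref{thm:cfl_1d}, with the key step being exactly the convex decomposition $f_{i,j,k} = \tfrac{1}{6}\bigl(f_{i,j,k}^\text{E}+f_{i,j,k}^\text{W}+f_{i,j,k}^\text{N}+f_{i,j,k}^\text{S}+f_{i,j,k}^\text{T}+f_{i,j,k}^\text{B}\bigr)$ that you identified, followed by the same regrouping into six nonnegative coefficients and the standard telescoping argument for mass conservation. Your write-up is in fact more detailed than the paper's, which states only the decomposition and defers the rest to the one-dimensional case.
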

\begin{proof}
	The proof of this theorem follows the same lines as in Theorem~\ref{thm:cfl_1d}. We only comment that in this case, one should express cell averages of a solution $f_{i,j,k}$ as a linear combination of corresponding values at cell interfaces, defined in \eqref{eq:solution_at_cell_interface_3d}, as
	\begin{equation*}
		f_{i,j,k} = \frac{1}{6} \left(f_{i,j,k}^\text{E} + f_{i,j,k}^\text{W} + f_{i,j,k}^\text{N} + f_{i,j,k}^\text{S} + f_{i,j,k}^\text{T} + f_{i,j,k}^\text{B}\right).
	\end{equation*}
	
	The proof of the conservation of mass follows the same reasoning as for the one-dimensional case, which can be found in \cite{carrillo:commun_comput_phys}.
\end{proof}


Numerical studies of spatially inhomogeneous PDEs \eqref{eq:continuum_chimera_model} using the finite volume scheme of this section are conducted in Sections \ref{sec:spatially_nonhomogeneous_solutions}-\ref{sec:phase_transitions_of_spatially_nonhomogeneous_solutions}. Up to now, we have presented the second order discretization of the phase space of the problem keeping the time domain continuous. This way, we have formulated the problems of solving PDEs \eqref{eq:continuum_chimera_model} and \eqref{eq:continuum_chimera_model_spatially_homogeneous} as the problems of solving systems of ODEs \eqref{eq:discretized_ode_3d} and \eqref{eq:discretized_ode_1d}, respectively. In the next section, we consider approaches to perform time discretization so as to keep the FVM of second order in time as well.

\subsection{Dimensionality splitting}

This section considers further discretization approaches for three-dimensional PDEs only. As a result of the phase space discretization from the previous section, we reformulate the IVP for PDEs \eqref{eq:continuum_chimera_model}-\eqref{eq:continuum_chimera_model_initial_condition} as the IVP for the system of ODEs \eqref{eq:discretized_ode_3d}, which we state here for convenience:
\begin{equation}
\label{eq:discretized_ode_3d_repeated}
	\frac{d}{dt}f_{i,j,k}^{}(t) = -\frac{F_{i+\half,j,k}^x - F_{i-\half,j,k}^x}{\Delta x} - \frac{F_{i,j+\half,k}^y - F_{i,j-\half,k}^y}{\Delta y} - \frac{F_{i,j,k+\half}^\varphi - F_{i,j,k-\half}^\varphi}{\Delta \varphi},\quad f_{i,j,k}(0) = (f_0)_{i,j,k}
\end{equation}
for $(i,j,k)\in\Omega_{N,M,L}$ and the fluxes are as defined in the previous section. From the PDE \eqref{eq:continuum_chimera_model} itself and from the derivation of the system of ODEs \eqref{eq:discretized_ode_3d_repeated}, we have seen that the fluxes qualitatively differ for spatial and angular dimensions. Therefore, it might become unreasonable to tackle all of them at once. We will employ this fact in the further construction of our FVM.

The class of methods that allow us to separate dynamics of an autonomous system of ODEs into several subsystems is known as splitting methods \cite{hairer2002}. In this paper, we are interested in such splitting methods that lead to the second order accuracy in time. These splitting methods have been classically used in simulations for kinetic equations in plasma physics, see \cite{kraus2017jpp,filbet2001jcp} for instance.
First, we note the following. We can reenumerate the three-dimensional grid of the problem, which is enumerated with three-dimensional indices from $\Omega_{N,M,L}$, with one-dimensional indices ranging from $0$ to $N M L$. In other words, the three-dimensional set of indices $\Omega_{N,M,L}$ can be bijectively mapped into a one-dimensional set with $NML$ indices. This allows us to express the above model shortly as
\begin{equation*}
	\frac{d}{dt}\bar{f}(t) = F(\bar{f}),\quad \bar{f}(0) = \bar{f}_0,
\end{equation*}
where $\bar{f} = (f_0,\dots,f_{NML-1}) \in \mathbb{R}_+^{NML}$. We note that we can separate the vector field on the right hand side as
\begin{equation}
\label{eq:abstract_ode_with_two_flows}
	\frac{d}{dt}\bar{f}(t) = F^{[1]}(\bar{f}) + F^{[2]}(\bar{f}),
\end{equation}
where $F^{[1]}$ and $F^{[2]}$ are defined by spatial and angular fluxes from \eqref{eq:discretized_ode_3d_repeated}, respectively. Now, according to the splitting procedure, instead of the problem \eqref{eq:abstract_ode_with_two_flows}, we consider two subproblems:
\begin{equation}
\label{eq:two_subproblems}
	\frac{d}{dt}\bar{f} = F^{[1]}(\bar{f}) \quad\text{ and }\quad \frac{d}{dt}\bar{f} = F^{[2]}(\bar{f}).
\end{equation}

Let $\phi_t^{[i]}:\mathbb{R}_+^{NML}\rightarrow\mathbb{R}_+^{NML}, i=1,2$ be dynamical flows generated by vector fields $F^{[i]},i=1,2$ \cite{guckenheimer1990,hirsch2004}, respectively. If we know exact flows with respect to $F^{[1]}$ and $F^{[2]}$, we could attempt to construct a numerical scheme of the form
$
	\Phi_{\Delta t} = \phi_{\Delta t}^{[1]} \circ \phi_{\Delta t}^{[2]},
$
which is known as the Lie-Trotter splitting. However, this formula is only first order accurate. If we also consider the adjoint of the previous method given by
$
	\Phi_{\Delta t}^* = \phi_{\Delta t}^{[2]} \circ \phi_{\Delta t}^{[1]}
$
and build the composition of $\Phi$ and $\Phi^*$ with halved step sizes, we obtain by the theorem of the composition of methods \cite{hairer2002}, a new method
\begin{equation*}
	\Phi_{\Delta t} = \phi_{\Delta t / 2}^{[1]} \circ \phi_{\Delta t}^{[2]} \circ \phi_{\Delta t / 2}^{[1]},
\end{equation*}
which is known as the Strang (Marchuk) splitting. The Strang splitting formula is of order 2. However, for that technique to work, we must be able to integrate two subproblems \eqref{eq:two_subproblems} exactly, which is usually not the case. Therefore, our aim is to obtain a second order method which consists of the composition of approximate direct methods only.

\begin{proposition}
	Consider the composition of numerical methods
	\begin{equation}
	\label{eq:composition_method_half_one_half}
		\Psi_{\Delta t} = \Phi_{\Delta t / 2}^{[2]} \circ \Phi_{\Delta t}^{[1]} \circ \Phi_{\Delta t / 2}^{[2]}.
	\end{equation}
	If the methods $\Phi^{[1]}$ and $\Phi^{[2]}$ are of second order at least, the resulting composition method $\Psi$ is of second order.
\end{proposition}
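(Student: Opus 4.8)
The plan is to prove order two by comparing the numerical composition $\Psi_{\Delta t}$ against the \emph{exact} Strang splitting $\phi_{\Delta t/2}^{[2]} \circ \phi_{\Delta t}^{[1]} \circ \phi_{\Delta t/2}^{[2]}$, whose order-two accuracy with respect to the exact flow $\phi_{\Delta t}$ of the combined field $F = F^{[1]} + F^{[2]}$ has already been recorded above. Recall that a method $\Phi_{\Delta t}$ has order $p$ precisely when its local error satisfies $\Phi_{\Delta t}(\bar f) - \phi_{\Delta t}(\bar f) = O(\Delta t^{p+1})$ uniformly for $\bar f$ in a compact set on which the vector fields are smooth, smoothness of $F^{[1]}, F^{[2]}$ holding on any region bounded away from $\bar f = 0$ as guaranteed by the positivity of the computed solution. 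It then suffices to establish the two estimates $\Psi_{\Delta t} - \phi_{\Delta t/2}^{[2]} \circ \phi_{\Delta t}^{[1]} \circ \phi_{\Delta t/2}^{[2]} = O(\Delta t^3)$ and $\phi_{\Delta t/2}^{[2]} \circ \phi_{\Delta t}^{[1]} \circ \phi_{\Delta t/2}^{[2]} - \phi_{\Delta t} = O(\Delta t^3)$, the second of which is exactly the order-two property of the exact Strang splitting with this arrangement of half and full steps, and then to combine them by the triangle inequality.

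The heart of the argument is an error-propagation lemma for compositions of near-identity maps, which I would establish first. By hypothesis each subflow method is of order at least two, so with a step of size $\Delta t$ or $\Delta t/2$ one has $\Phi_{\Delta t}^{[1]}(\bar f) = \phi_{\Delta t}^{[1]}(\bar f) + O(\Delta t^3)$ and $\Phi_{\Delta t/2}^{[2]}(\bar f) = \phi_{\Delta t/2}^{[2]}(\bar f) + O(\Delta t^3)$, the halved step leaving the exponent unchanged. Each of these maps is a near-identity map, $\Phi_{\Delta t}^{[i]}(\bar f) = \bar f + O(\Delta t)$, so its Jacobian is $I + O(\Delta t)$ and it is Lipschitz with constant $1 + O(\Delta t)$ on the relevant compact set. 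Consequently, if $A_{\Delta t} = \tilde A_{\Delta t} + O(\Delta t^3)$ and $B_{\Delta t} = \tilde B_{\Delta t} + O(\Delta t^3)$ with all four maps smooth and near-identity, then writing $A_{\Delta t}(B_{\Delta t}(\bar f)) = \tilde A_{\Delta t}(B_{\Delta t}(\bar f)) + O(\Delta t^3)$ and $\tilde A_{\Delta t}(B_{\Delta t}(\bar f)) = \tilde A_{\Delta t}(\tilde B_{\Delta t}(\bar f)) + O(\Delta t^3)$, the last step using the uniform Lipschitz bound to transport the $O(\Delta t^3)$ perturbation of the argument through $\tilde A_{\Delta t}$, yields $A_{\Delta t} \circ B_{\Delta t} = \tilde A_{\Delta t} \circ \tilde B_{\Delta t} + O(\Delta t^3)$.

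With this lemma in hand the first estimate follows by applying it twice: the three factors of $\Psi_{\Delta t}$ differ from the corresponding exact subflows by $O(\Delta t^3)$ each, and since the three replacements accumulate only additively, each contributing a single $O(\Delta t^3)$ term that survives intact under the surrounding Lipschitz factors, we obtain $\Psi_{\Delta t} = \phi_{\Delta t/2}^{[2]} \circ \phi_{\Delta t}^{[1]} \circ \phi_{\Delta t/2}^{[2]} + O(\Delta t^3)$. Combining with the order-two accuracy of the exact Strang splitting gives $\Psi_{\Delta t}(\bar f) - \phi_{\Delta t}(\bar f) = O(\Delta t^3)$, which is the claim. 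I expect the only genuinely delicate point to be the error-propagation lemma, and specifically the verification that the substitution errors neither amplify nor interact at order $\Delta t^2$ when passed through the neighbouring near-identity factors; the symmetric half--full--half structure plays no role in \emph{this} step, it being what secured the order-two accuracy of the exact splitting that I am importing, so here it is purely the uniform Lipschitz stability of smooth near-identity maps that must be pinned down. A fully self-contained alternative would expand $\Psi_{\Delta t}(\bar f)$ by Taylor's theorem and match it term by term against $\bar f + \Delta t\, F(\bar f) + \half \Delta t^2 F'(\bar f) F(\bar f) + O(\Delta t^3)$, but this reproves the exact-splitting cancellation and is more laborious than the comparison route above.
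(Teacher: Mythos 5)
Your proof is correct, but it takes a genuinely different route from the paper's. The paper's own proof is a direct computation: it names the three intermediate states $\bar f_1,\bar f_2,\bar f_3$ produced by the three numerical substeps, Taylor expands each substep to second order around $\Delta t=0$, and composes these expansions to verify that the result agrees with the second-order Taylor expansion of the exact flow of $F^{[1]}+F^{[2]}$ --- i.e., it redoes the Strang cancellation with the numerical maps already in place, which is precisely the ``self-contained alternative'' you mention and set aside in your final sentence. You instead modularize into three ingredients: (i) the $O(\Delta t^3)$ local error of each sub-method, unchanged for the halved step; (ii) an error-propagation lemma asserting that smooth near-identity maps, having Lipschitz constant $1+O(\Delta t)$, transport $O(\Delta t^3)$ perturbations without amplification, so replacing each exact subflow by its numerical counterpart in the triple composition costs only an additive $O(\Delta t^3)$; and (iii) the second-order accuracy of the \emph{exact} Strang splitting, which the paper indeed records beforehand (with the roles of the labels $[1]$ and $[2]$ exchanged, which is immaterial since that statement is symmetric in the two vector fields). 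Your route buys a clean separation between consistency of the sub-methods and the splitting cancellation: the same propagation lemma handles compositions with any number of factors (e.g., the four-factor variant $\Phi^{[1]}_{\Delta t/2}\circ\Phi^{[2]}_{\Delta t/2}\circ\Phi^{[2]}_{\Delta t/2}\circ\Phi^{[1]}_{\Delta t/2}$ the paper mentions afterwards), and it makes explicit the uniformity and smoothness hypotheses --- positivity keeping the solution away from the singularity of the logarithmic term --- that the paper's one-line Taylor argument leaves implicit. What the paper's route buys is self-containedness: it needs no exact-splitting result as a black box, the whole verification being one finite explicit expansion.
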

\begin{proof}
	Let us denote approximate solutions to subproblems \eqref{eq:two_subproblems} using numerical flows as $\bar{f}_1(\Delta t/2) = \Phi_{\Delta t/2}^{[2]}(\bar{f}_1(0))$ if $\bar{f}_1(0) = \bar{f}_0$, $\bar{f}_2(\Delta t) = \Phi_{\Delta t}^{[1]}(\bar{f}_2(0))$ if $\bar{f}_2(0) = \bar{f}_1(\Delta t/2)$, and $\bar{f}_3(\Delta t/2) = \Phi_{\Delta t/2}^{[2]}(\bar{f}_3(0))$ if $\bar{f}_3(0) = \bar{f}_2(\Delta t)$. 
	By performing Taylor expansion of each solution up to second order around $\Delta t=0$ and expressing them in terms of a respective previous solution, we obtain the second order Taylor expansion of the exact flow generated by \eqref{eq:abstract_ode_with_two_flows}.
\end{proof}

Note that one may start by considering the approximate flows $\Phi_{\Delta t}^{[1]}$ and $\Phi_{\Delta t}^{[2]}$ for \eqref{eq:two_subproblems} instead of the exact ones $\phi_{\Delta t}^{[1]}$ and $\phi_{\Delta t}^{[2]}$ from the very beginning and combine them as $\Phi_{\Delta t}=\Phi_{\Delta t}^{[1]}\circ\Phi_{\Delta t}^{[2]}$ to obtain a first order approximation to \eqref{eq:abstract_ode_with_two_flows}. By performing the composition of this method with its adjoint with halved step sizes \cite{hairer2002} and replacing all adjoint flows with the direct ones, one obtains
$
	\Psi_{\Delta t} = \Phi_{\Delta t / 2}^{[1]} \circ \Phi_{\Delta t / 2}^{[2]} \circ \Phi_{\Delta t / 2}^{[2]} \circ \Phi_{\Delta t / 2}^{[1]},
$
which can also be shown to be second order accurate in time, provided each of the submethods is of second order too. However, since \eqref{eq:composition_method_half_one_half} requires fewer time steppers, we choose it for all the subsequently reported results. For such an FVM, we state similar results about prositivity preservation and mass conservation, as in previous sections.

\begin{theorem}
	Consider the IVP \eqref{eq:continuum_chimera_model}-\eqref{eq:continuum_chimera_model_initial_condition} with periodic boundaries and the semidiscrete FVM \eqref{eq:discretized_ode_3d_repeated} with dimensionality splitting \eqref{eq:two_subproblems},\eqref{eq:composition_method_half_one_half} with
	\begin{equation*}
		F^{[1]}_{i,j,k} = - \frac{F_{i,j,k+\half}^\varphi - F_{i,j,k-\half}^\varphi}{\Delta \varphi},
	\end{equation*}
	\begin{equation*}
		F^{[2]}_{i,j,k} = -\frac{F_{i+\half,j,k}^x - F_{i-\half,j,k}^x}{\Delta x} - \frac{F_{i,j+\half,k}^y - F_{i,j-\half,k}^y}{\Delta y},
	\end{equation*}
	where the numerical fluxes $F_{i\pm\half,j,k}^x$, $F_{i,j\pm\half,k}^y$, $F_{i,j,k\pm\half}^\varphi$ are defined as in \eqref{eq:upwind_flux_3d} using a positivity-preserving piecewise linear reconstruction \eqref{eq:piecewise_linear_reconstruction_3d}. Assume that each system of ODEs \eqref{eq:two_subproblems} is discretized by a second order method whose time step can be expressed as a convex combination of several forward Euler steps. Then, the computed cell averages remain nonnegative $f_{i,j,k}\geq0, (i,j,k)\in\Omega_{N,M,L}$ for all $t\geq0$, provided that the following CFL condition is satisfied:
	\begin{eqnarray*}
		\Delta t \leq \min\left\{ \frac{\Delta x}{4a}, \frac{\Delta y}{4b}, \frac{\Delta \varphi}{2c} \right\},
	\end{eqnarray*}
	where $a=\max\limits_{i,j,k}\left\{ u_{i+\frac{1}{2},j,k}^+,-u_{i+\frac{1}{2},j,k}^- \right\}$, $b=\max\limits_{i,j,k}\left\{ v_{i,j+\frac{1}{2},k}^+,-v_{i,j+\frac{1}{2},k}^- \right\}$, and $c=\max\limits_{i,j,k}\left\{ w_{i,j,k+\frac{1}{2}}^+,-w_{i,j,k+\frac{1}{2}}^- \right\}$ and the velocities at cell interfaces are defined in \eqref{eq:cell_interface_velocities_xy},\eqref{eq:cell_interface_velocities_phi}.
	
	Moreover, the mass of the discretized system is conserved, i.e.,
	\begin{equation*}
		\frac{d}{dt} \int_\Omega f(r,\varphi,t)\; \mathrm{d}r\mathrm{d}\varphi = 0.
	\end{equation*}
\end{theorem}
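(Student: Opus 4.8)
The plan is to reduce both claims—nonnegativity and mass conservation—to statements about the three individual sub-flows that make up $\Psi_{\Delta t}$ in \eqref{eq:composition_method_half_one_half}, exploiting the elementary fact that both properties are closed under composition. Since $\Psi_{\Delta t}=\Phi_{\Delta t/2}^{[2]}\circ\Phi_{\Delta t}^{[1]}\circ\Phi_{\Delta t/2}^{[2]}$, it suffices to show that the angular sub-flow $\Phi^{[1]}$ with step $\Delta t$ maps a nonnegative state to a nonnegative one, that each spatial sub-flow $\Phi^{[2]}$ with step $\Delta t/2$ does the same, and that each sub-flow conserves the total discrete mass $\sum_{i,j,k}f_{i,j,k}$. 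Because each sub-step is assumed to be a second-order integrator expressible as a convex combination of forward Euler steps, it is enough to verify both properties for a single forward Euler step of each piece: a convex combination of nonnegative states is nonnegative, and all stage values share a common total mass, so the SSP structure transfers both properties automatically by the standard Shu--Osher argument.

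For the angular sub-flow I would observe that, at a fixed spatial index $(i,j)$, the operator $F^{[1]}_{i,j,\cdot}$ is exactly the one-dimensional angular scheme analysed in Theorem~\ref{thm:cfl_1d}: the spatial fluxes are switched off, and the angular upwind flux $F^\varphi_{i,j,k+\half}$ in \eqref{eq:upwind_flux_3d} together with the interface values $f^{\text{T}}_{i,j,k},f^{\text{B}}_{i,j,k}$ from \eqref{eq:solution_at_cell_interface_3d} has the identical algebraic form. Hence the proof of Theorem~\ref{thm:cfl_1d} applies essentially verbatim—writing $f_{i,j,k}=\half(f^{\text{T}}_{i,j,k}+f^{\text{B}}_{i,j,k})$, grouping the upwind fluxes, and demanding that the two resulting diagonal coefficients stay nonnegative—yielding the bound $\Delta t\le\Delta\varphi/(2c)$ for this step.

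For each spatial sub-flow I would repeat the same mechanism in two dimensions. Writing $\tau=\Delta t/2$ for the half step and using that the reconstruction \eqref{eq:solution_at_cell_interface_3d} satisfies $f^{\text{E}}_{i,j,k}+f^{\text{W}}_{i,j,k}=f^{\text{N}}_{i,j,k}+f^{\text{S}}_{i,j,k}=2f_{i,j,k}$, I would decompose $f_{i,j,k}=\tfrac{1}{4}(f^{\text{E}}_{i,j,k}+f^{\text{W}}_{i,j,k}+f^{\text{N}}_{i,j,k}+f^{\text{S}}_{i,j,k})$. Substituting the upwind fluxes \eqref{eq:upwind_flux_3d} into a forward Euler spatial half-step and collecting terms by the cell's own interface values, the cross-cell contributions carry the nonnegative factors $u^+_{i-\half,j,k}$, $-u^-_{i+\half,j,k}$, $v^+_{i,j-\half,k}$, $-v^-_{i,j+\half,k}$, while the four diagonal coefficients are nonnegative as soon as $\tfrac{\tau}{\Delta x}|u^\pm|\le\tfrac{1}{4}$ and $\tfrac{\tau}{\Delta y}|v^\pm|\le\tfrac{1}{4}$. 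With $a,b$ as in the statement these single-step conditions are guaranteed by the advertised restrictions $\Delta t\le\Delta x/(4a)$ and $\Delta t\le\Delta y/(4b)$ (indeed with room to spare, owing to the halved step), so they suffice. Mass conservation of each sub-flow is then immediate: summing the discrete divergence $F^{[1]}$, respectively $F^{[2]}$, over the periodic torus $\Omega_{N,M,L}$ telescopes to zero, exactly as in the one-dimensional argument cited from \cite{carrillo:commun_comput_phys}, so $\tfrac{d}{dt}\sum_{i,j,k}f_{i,j,k}=0$ for each piece and hence for the composition.

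The flux bookkeeping above is routine; the point that needs care is the interplay between composition and well-posedness of the intermediate states. Because positivity is only guaranteed step by step, I must check that the output of each sub-flow is a legitimate input for the next—in particular the angular velocity potential \eqref{eq:angular_velocity_potential_3d} contains a $D_\varphi\ln f$ term, so the state entering $\Phi_{\Delta t}^{[1]}$ must be nonnegative (and strictly positive on its support) for the velocities feeding \eqref{eq:cell_interface_velocities_phi} to be defined. The main obstacle is therefore less the single-step CFL estimates, which mirror Theorem~\ref{thm:cfl_1d}, than the verification that nonnegativity is propagated cleanly through all three compositions so that the reconstruction and the logarithmic potential remain well defined at every stage; intersecting the three per-step conditions is what pins down the $\min$ over the directional CFL numbers in the statement.
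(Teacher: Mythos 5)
Your proposal is correct and follows essentially the same route as the paper's proof: apply the positivity argument of Theorem~\ref{thm:cfl_1d} to each subsystem separately, using the convex decompositions $f_{i,j,k}=\tfrac{1}{2}(f_{i,j,k}^\text{T}+f_{i,j,k}^\text{B})$ for the angular sub-flow and $f_{i,j,k}=\tfrac{1}{4}(f_{i,j,k}^\text{E}+f_{i,j,k}^\text{W}+f_{i,j,k}^\text{N}+f_{i,j,k}^\text{S})$ for the spatial sub-flow, with mass conservation following from the telescoping of fluxes over the periodic torus as in the one-dimensional case. Your additional remarks on SSP convex combinations, the half-step slack in the spatial CFL bound, and the need for positivity to keep the logarithmic potential well defined at intermediate stages are sound refinements of what the paper leaves implicit.
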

\begin{proof}
	The proof of this theorem follows the same lines as in Theorem~\ref{thm:cfl_1d} but it should be applied to each subsystem \eqref{eq:two_subproblems} separately. By expressing cell averages of a solution $f_{i,j,k}$ of the first subsystem as a linear combination of corresponding values at cell interfaces in angular direction, defined in \eqref{eq:solution_at_cell_interface_3d}, as
	\begin{equation*}
		f_{i,j,k} = \frac{1}{2} \left(f_{i,j,k}^\text{T} + f_{i,j,k}^\text{B}\right)
	\end{equation*}
	and a solution of the second subsytem as a linear combination of corresponding values at cell interfaces in spatial directions as
	\begin{equation*}
		f_{i,j,k} = \frac{1}{4} \left(f_{i,j,k}^\text{E} + f_{i,j,k}^\text{W} + f_{i,j,k}^\text{N} + f_{i,j,k}^\text{S}\right),
	\end{equation*}
	the result follows.
	
	The proof of the conservation of mass follows the same reasoning as for the one-dimensional case, which can be found in \cite{carrillo:commun_comput_phys}.
\end{proof}


We implemented the presented finite volume schemes in C++. To be able to perform numerical analysis of the schemes with meaningful phase space discretizations, we parallelized algorithms using the message passing interface (MPI) standard \cite{gropp2014}. Our implementation can be found under \cite{github_kruk}.

\section{Numerical tests\label{sec:numerical_tests}}

In this section, we demonstrate the performance of the developed numerical scheme. As it has been mentioned, the assumption of spatial homogeneity of solutions allows us to greatly simplify the analysis of the system of interest as well as gain theoretical insight on the behavior of its solutions. Therefore, we first conduct numerical experiments under the assumption of spatial homogeneity in Sections \ref{sec:stationary_phase_synchronization_1d}-\ref{sec:phase_transitions_of_spatially_homogeneous_solutions}. Here, we test how our scheme performs on stationary solutions, i.e., when the phase lag is absent $\alpha=0$, and on new traveling-wave solutions, introduced in Proposition~\ref{prop:traveling_wave_solution}, i.e., when $\alpha>0$. Afterwards, we study phase transitions between disordered motion and partial synchronization described by a traveling wave solution solely. We refer the interested reader to \cite{carrillo2019:jcp} for respective numerical studies of steady states. We quantify these phase transitions in terms of a global polar order parameter \eqref{eq:polar_order_parameter_homogeneous} that measures the degree of polarization in a particle flow. In Section~\ref{sec:spatially_nonhomogeneous_solutions}, we test the numerical scheme in a general setup, where nonstationary spatially inhomogeneous solutions are expected to exist. Moreover, we study phase transitions for spatially inhomogeneous solutions, which are not known analytically. Those phase transitions are quantified in terms of an appropriate order parameter that measures the level of spatial localization on a two-dimensional domain with periodic boundaries. These studies are presented in Section~\ref{sec:phase_transitions_of_spatially_nonhomogeneous_solutions}.

\subsection{Error norms}

In the following, we will examine the accuracy of numerical schemes in both spatially homogeneous and inhomogeneous setups. In the former, analytic solutions are known while in the latter, they are not. Therefore, we introduce different norms for different cases. If we know an exact solution, we will use the following norms to quantify convergence errors \cite{sun:journal_of_computational_physics}:
\begin{equation}
\label{eq:error_norms_exact}
\begin{split}
	e_{L^1}^{} &= \sum_{(i,j,k)\in\Omega_{N,M,L}} \iiint_{C_{i,j,k}} \left|\tilde{f}_h(x,y,\varphi,t) - f(x,y,\varphi,t)\right| \mathrm{d}x\mathrm{d}y\mathrm{d}\varphi, \\
	e_{L^2}^{} &= \left( \sum_{(i,j,k)\in\Omega_{N,M,L}} \iiint_{C_{i,j,k}} \left| \tilde{f}_h(x,y,\varphi,t) - f(x,y,\varphi,t) \right|^2 \mathrm{d}x\mathrm{d}y\mathrm{d}\varphi \right) ^ \frac{1}{2}, \\
	e_{L^\infty}^{} &= \max_{(i,j,k)\in\Omega_{N,M,L}} \left|\tilde{f}_h(x_i,y_j,\varphi_k,t) - f(x_i,y_j,\varphi_k,t)\right|,
\end{split}
\end{equation}
where $\tilde{f}_h$ is a numerical solution with the reconstruction defined by \eqref{eq:piecewise_linear_reconstruction_1d} or \eqref{eq:piecewise_linear_reconstruction_3d} and phase space discretization $h$, $f$ is an exact solution, $C_{i,j,k}=[x_{i-\frac{1}{2}},x_{i+\frac{1}{2}}]\times[y_{j-\frac{1}{2}},y_{j+\frac{1}{2}}]\times[\varphi_{k-\frac{1}{2}},\varphi_{k+\frac{1}{2}}],\; i=0,\dots,N-1,j=0,\dots,M-1,k=0,\dots,L-1$ is a cell on a uniform grid with $NML$ points as defined previously. The integrals in the above expressions are computed using Gauss-Legendre quadrature \cite{press2002numerical}. Note that in case we use quasiuniform initial conditions, we need to align both solutions in a proper way. In the reported results, we shift an exact solution such that its first moment coincides with the first moment of the reconstructed solution. 

In cases where we do not have an exact solution, which is the case when we retrieve spatially inhomogeneous patterns, we first compute a reference solution with the finest discretization $h_1$ and compare the rest of the solutions with cruder discretizations $h_2$ to it using
\begin{equation}
\label{eq:error_norms_benchmark}
\begin{split}
	e_{L^1}^{} &= \sum_{(i,j,k)\in\Omega_{N,M,L}} \iiint_{C_{i,j,k}} \left|\tilde{f}_{h_1}^{}(x,y,\varphi,t) - \tilde{f}_{h_2}^{}(x,y,\varphi,t)\right| \mathrm{d}x\mathrm{d}y\mathrm{d}\varphi, \\
	e_{L^2}^{} &= \left( \sum_{(i,j,k)\in\Omega_{N,M,L}} \iiint_{C_{i,j,k}} \left| \tilde{f}_{h_1}^{}(x,y,\varphi,t) - \tilde{f}_{h_2}^{}(x,y,\varphi,t) \right|^2 \mathrm{d}x\mathrm{d}y\mathrm{d}\varphi \right) ^ \frac{1}{2}, \\
	e_{L^\infty}^{} &= \max_{(i,j,k)\in\Omega_{N,M,L}} \left|\tilde{f}_{h_1}^{}(x_i,y_j,\varphi_k,t) - \tilde{f}_{h_2}^{}(x_i,y_j,\varphi_k,t)\right|,
\end{split}
\end{equation}
where the grid size is chosen as a least common multiple in each dimension, i.e. $C_{i,j,k}, i=0,\dots,\lcm(N_1,N_2)-1, j=0,\dots,\lcm(M_1,M_2)-1, k=0,\dots,\lcm(L_1,L_2)-1$. If one needs to compute an error for one and two dimensional domains, it is done straightforwardly by omitting two or one dimensions, respectively, in the above definitions.

\subsection{Stationary phase synchronization (1D)\label{sec:stationary_phase_synchronization_1d}}

We start the inspection of performance of constructed numerical schemes by first analyzing the simpler spatially homogeneous systems, whose time evolution is governed by PDEs \eqref{eq:continuum_chimera_model_spatially_homogeneous}. It is known that for sufficiently high diffusion levels $D_\varphi$ (or equivalently for small coupling coefficients $\sigma$), the asymptotic solution consists of chaotically moving particles, whose distribution is given by a uniform density function \eqref{eq:uniform_density_function}. For sufficiently low diffusion levels (or large coupling coefficients), particles self-organize into spatially homogeneous polar groups, which are stationary in the absence of a phase lag, i.e., for $\alpha=0$, or rotate with constant frequency for $\alpha\neq0$. In this section, we investigate how the numerical scheme performs in the former case. Namely, we consider a spatially homogeneous version of the continuum limit equation with $\alpha=0$, also known as the continuum Kuramoto model \cite{carrillo2019:jcp} with diffusion:
\begin{equation}
\label{eq:spatially_homogeneous_continuum_pde_with_zero_lag}
\begin{cases}
	\partial_t f(\varphi,t) = -\partial_\varphi\left[ w[f](\varphi,t)f(\varphi,t) \right] + D_\varphi\partial_{\varphi\varphi}f(\varphi,t) & \text{ in } \mathbb{T}\times(0,\infty) \\
	f(\varphi,0) = a_0 + \sum\limits_{k=1}^{K} \left[ a_k\cos(k\varphi) + b_k\sin(k\varphi) \right] & \text{ on } \mathbb{T}\times\left\{t=0\right\},
\end{cases}
\end{equation}
where the angular velocity induced by particles' interactions is $w[f](\varphi,t)=\sigma \int_\mathbb{T} f(\varphi',t)\sin(\varphi' - \varphi) \mathrm{d}\varphi'$. The initial condition $f(\varphi,0)$ in the form of a trigonometric series is used to model an irregular but sufficiently smooth function, which is required by the numerical scheme. We shall refer to such initial conditions as quasirandom initial conditions in subsequent discussions. The series coefficients are chosen in such a way that $f(\varphi,0)$ is nonnegative and properly normalized, i.e. $a_0=\frac{1}{2\pi}$, $a_k,b_k\sim\mathcal{U}(-\varepsilon,\varepsilon),k=1,\dots,K$, $K\in\mathbb{N}$. Note that we cannot use a uniform probability density function \eqref{eq:uniform_density_function} as an initial condition since it is already a solution to the problem \eqref{eq:spatially_homogeneous_continuum_pde_with_zero_lag}. We also remark that the normalization of the density function is not generally required by the scheme but continuum limit PDEs, we consider in this paper, describe the behavior of probability density functions.

\begin{figure}[]
	\centering
	\includegraphics[width=1.0\textwidth]{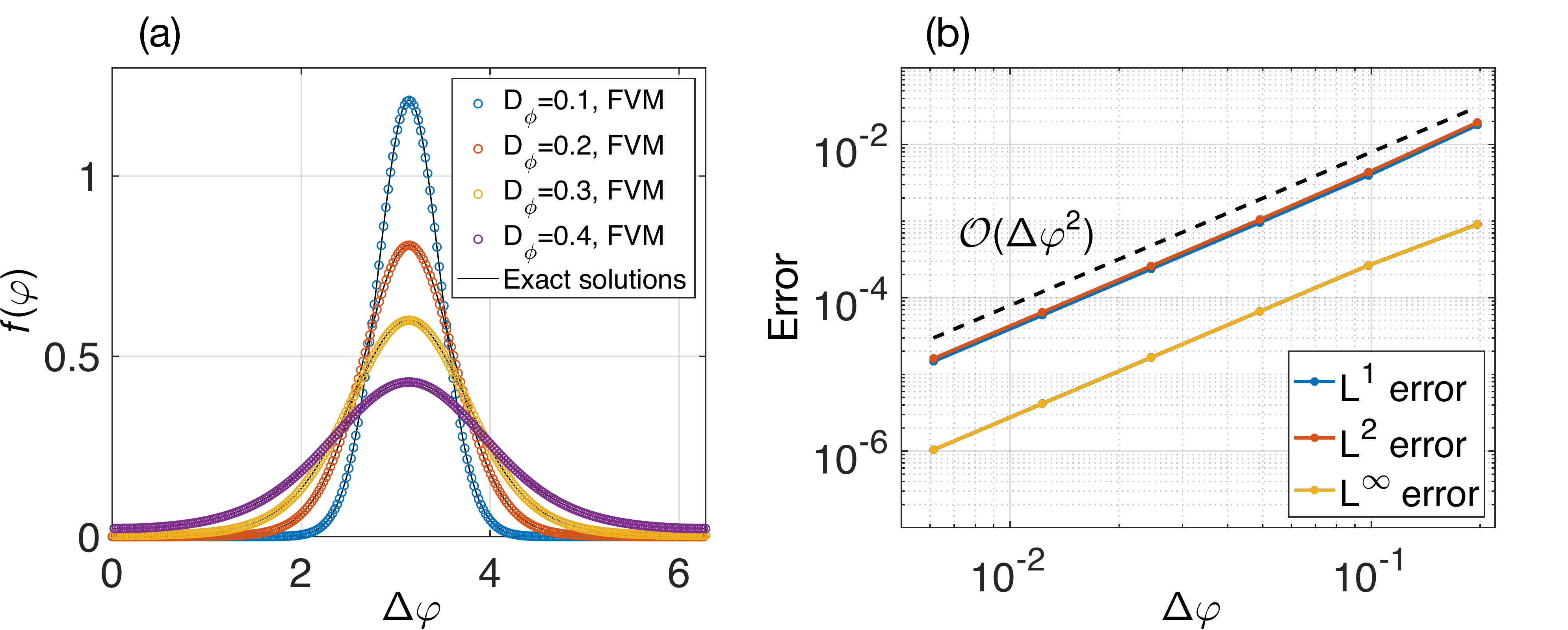}
	\caption
	{
		\label{fig:test_case_stationary_phase_synchronization_1d}
		(a) Numerical stationary solutions of (1+1)-dimensional Eq.~\eqref{eq:spatially_homogeneous_continuum_pde_with_zero_lag} with $L=256$ grid points, compared to respective exact solutions (black lines) in the form of the von Mises density function \eqref{eq:von_mises_density_function}. All solutions are centered such that their means coincide. (b) The convergence of error for the same problem \eqref{eq:spatially_homogeneous_continuum_pde_with_zero_lag} in $L^1$, $L^2$, and $L^\infty$ norms at $t=100$. The points correspond to grids with 32, 64, 128, 256, 512, and 1024 points. Model parameters are $\sigma=1$, $D_\varphi=0.1$.
	}
\end{figure}

It is well known that the problem \eqref{eq:spatially_homogeneous_continuum_pde_with_zero_lag}, i.e. the continuum Kuramoto model for identical oscillators with diffusion, exhibits a second-order phase transition with respect to either coupling strength $\sigma$ or diffusion level $D_\varphi$. The phase transition if of second order and occurs at $D_\varphi = \frac{\sigma}{2}$. Its numerical investigation was already described in detail in \cite{carrillo2019:jcp}, therefore, we do not consider it here. Instead, we only test how our finite volume scheme performs on the solutions of \eqref{eq:spatially_homogeneous_continuum_pde_with_zero_lag} for parameters from the region of stability of a polar order solution. In this case, this solution is a von Mises density function
\begin{equation}
\label{eq:von_mises_density_function}
	f(\varphi)=\frac{\exp\left[ \frac{\sigma R}{D_\varphi} \cos(\varphi-\Theta) \right]}{2\pi I_0\left(\gamma\right)},
\end{equation}
where $\Theta\in\mathbb{T}$ is the average direction of a particle flow, whose value depends on initial conditions, and $I_0$ is the modified Bessel function of the first kind. Fig.~\ref{fig:test_case_stationary_phase_synchronization_1d}(a) illustrates that a numerical solution approximates the exact one very well. Note that since the numerical solution was obtained from quasiuniform initial conditions according to \eqref{eq:spatially_homogeneous_continuum_pde_with_zero_lag}, it is manually centered so that $\Theta=\pi$, for comparison reasons. The numerical solutions are taken at $t=200$, when all of them has converged to steady states. However, such a long time is not required for all presented solutions. The time the system takes to converge to a steady state depends on the value of a diffusion coefficient $D_\varphi$ (or reversely the coupling strength $\sigma$). The closer the value to the order-disorder transition point $D_\varphi=\frac{\sigma}{2}$, the longer the time is. This is a well known bottleneck effect near bifurcation points. For the solution with $D_\varphi=0.4\sigma$, it takes around $t=200$ simulation time units to converge.

The results of error convergence are presented in Fig.~\ref{fig:test_case_stationary_phase_synchronization_1d}(b). One can see that in the current setup, the scheme is second order accurate as is guaranteed by its construction. Numerical solutions were compared to the aforementioned von Mises density function \eqref{eq:von_mises_density_function} using error norms defined by \eqref{eq:error_norms_exact}. Initial conditions were again quasiuniform, the time step was $\Delta t=10^{-5}$, and the errors were computed at $t=100$, when the steady state has been reached.


\subsection{Nonstationary phase synchronization (1D)}

\begin{figure}[]
	\centering
	\includegraphics[width=1.0\textwidth]{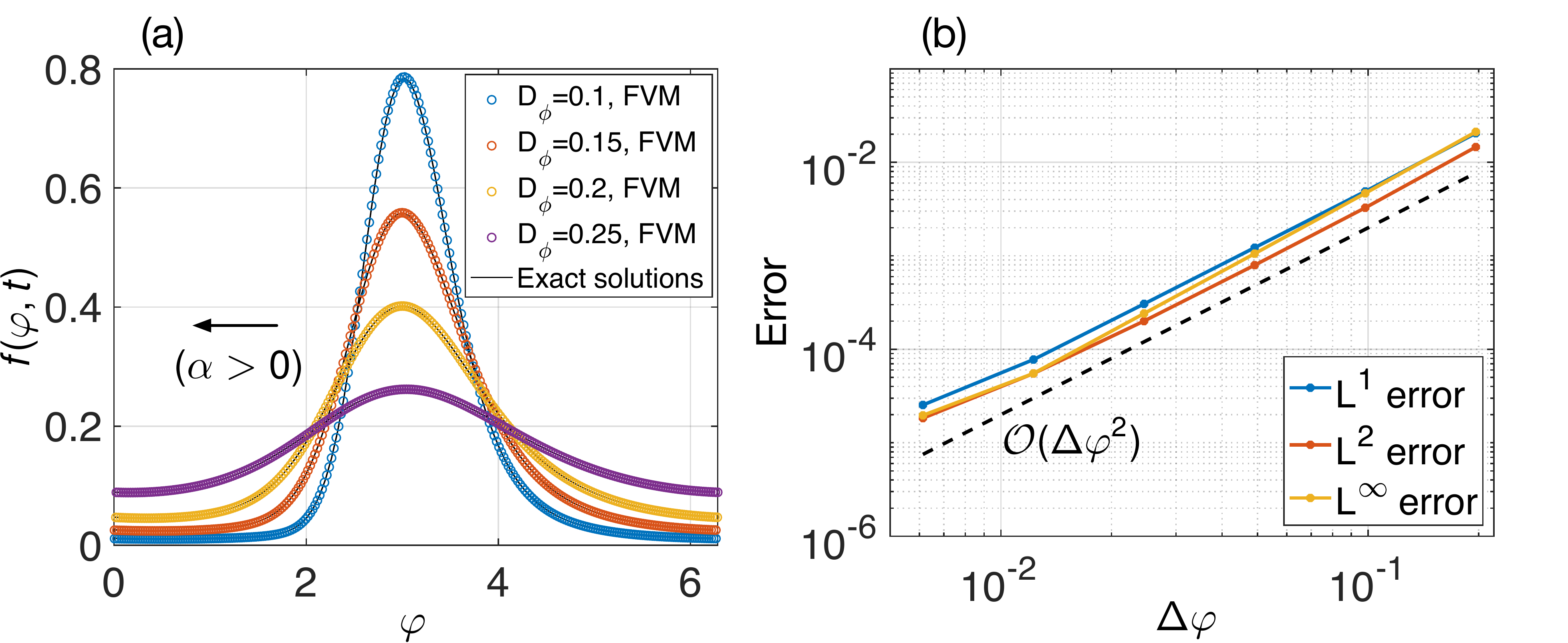}
	\caption
	{
		\label{fig:test_case_nonstationary_phase_synchronization_1d}
		(a) Profiles of numerical traveling wave solutions of (1+1)-dimensional Eq.~\eqref{eq:spatially_homogeneous_continuum_pde_with_nonzero_lag} with $L=256$ grid points, compared to the respective exact solutions in the form of the skewed circular one-peaked density function \eqref{eq:traveling_wave_solution}. All solutions are centered such that their means coincide. The black arrow indicates the direction of motion of traveling waves. (b) The convergence of error for the problem \eqref{eq:spatially_homogeneous_continuum_pde_with_nonzero_lag} in $L^1$, $L^2$, and $L^\infty$ norms at $t=100$. The points correspond to grids with 32, 64, 128, 256, 512, and 1024 points. Other parameters are $\sigma=1$, $\alpha=1$, $D_\varphi=0.1$.
	}
\end{figure}

Next, we keep the assumption of spatial homogeneity but assume $\alpha\neq0$. If the phase lag is added to the particle alignment interaction, a particle flow starts to rotate. Its continuum description in terms of a probability density function becomes skewed and assumes a traveling wave form. A slight generalization to the previous example leads to the following continuum Kuramoto-Sakaguchi model \cite{sakaguchi:ptp} with diffusion:
\begin{equation}
\label{eq:spatially_homogeneous_continuum_pde_with_nonzero_lag}
	\begin{cases}
		\partial_t f(\varphi,t) = -\partial_\varphi\left[ w[f](\varphi,t)f(\varphi,t) \right] + D_\varphi\partial_{\varphi\varphi}f(\varphi,t) & \text{ in } \mathbb{T}\times(0,\infty) \\
		f(\varphi,0) = a_0 + \sum\limits_{k=1}^{K} \left[ a_k\cos(k\varphi) + b_k\sin(k\varphi) \right] & \text{ on } \mathbb{T}\times\left\{t=0\right\},
	\end{cases}
\end{equation}
where $w[f](\varphi,t) = \sigma \int_\mathbb{T} f(\varphi',t)\sin(\varphi' - \varphi - \alpha) \mathrm{d}\varphi'$ and the choice of an initial condition follows the same considerations as in the previous example. This problem has again two solutions, a uniform density function \eqref{eq:uniform_density_function} and a skewed unimodal density function \eqref{eq:traveling_wave_solution_profile}
\begin{equation}
\label{eq:traveling_wave_solution}
\begin{split}
	f(\varphi,t) &= c_0 \exp\left[ -\frac{v}{D_\varphi}\varphi + \frac{\sigma R}{D_\varphi}\cos(\varphi-vt+\alpha) \right] \times \\
	\times &\left( 1 + \left( e^{2\pi \frac{v}{D_\varphi}} - 1 \right) \frac{\int_{0}^{\varphi-vt} \exp\left[ \frac{v}{D_\varphi}\varphi' - \frac{\sigma R}{D_\varphi}\cos(\varphi'+\alpha) \right] \mathrm{d}\varphi'}{\int_\mathbb{T} \exp\left[ \frac{v}{D_\varphi}\varphi' - \frac{\sigma R}{D_\varphi}\cos(\varphi'+\alpha) \right] \mathrm{d}\varphi'} \right),\quad \varphi\in[vt,2\pi+vt)
\end{split}
\end{equation}
where $c_0\in\mathbb{R}$ is a normalization constant. The stability of solutions to the problem \eqref{eq:spatially_homogeneous_continuum_pde_with_nonzero_lag} now depends on the values of a phase lag parameter $\alpha$, diffusion level $D_\varphi$, and coupling strength $\sigma$. With respect to these parameters, the system exhibits a second order phase transition, which we will investigate later (see Section~\ref{sec:phase_transitions_of_spatially_homogeneous_solutions}). The phase transition occurs at $D_\varphi = \frac{\sigma}{2}\cos\alpha$. Here, we illustrate the performance of the scheme for parameter values from the region of stability of the skewed density function \eqref{eq:traveling_wave_solution}, i.e. for $D_\varphi < \frac{\sigma}{2}\cos\alpha$. Fig.~\ref{fig:test_case_nonstationary_phase_synchronization_1d}(a) illustrates that the numerical solution approximates the exact one well. Note that Fig.~\ref{fig:test_case_nonstationary_phase_synchronization_1d}(a) shows profiles of actual solutions which have been manually centered for comparison reasons. The numerical solutions were taken at $t=1000$, which is the time the solution with $\alpha=1$, $D_\varphi=0.25$, $\sigma=1$ takes to converge to the traveling wave form \eqref{eq:traveling_wave_solution}. This is again because of the critical slowing-down close to the phase transition line (see the discussion in the previous section). In contrast to the zero phase lag case, the phase transition occurs for smaller diffusion levels for fixed $\alpha$. Moreover, the time the system takes to converge increases exponentially with $\alpha$ even when diffusion is absent \cite{kruk:aps2018}.

We illustrate the error convergence for parameters from the same stability region (cf. Fig.~\ref{fig:test_case_nonstationary_phase_synchronization_1d}(b)). One can see that the errors are again of second order as expected. The norms were computed according to Eqs.~\eqref{eq:error_norms_exact} with Eq.~\eqref{eq:traveling_wave_solution} as a reference solution. The time step was chosen $\Delta t=10^{-5}$. The norms were computed at $t=100$, when the system had converged to a traveling wave solution.

\subsection{Stationary phase synchronization (3D)}

\begin{figure}[]
	\centering
	\includegraphics[width=0.85\textwidth]{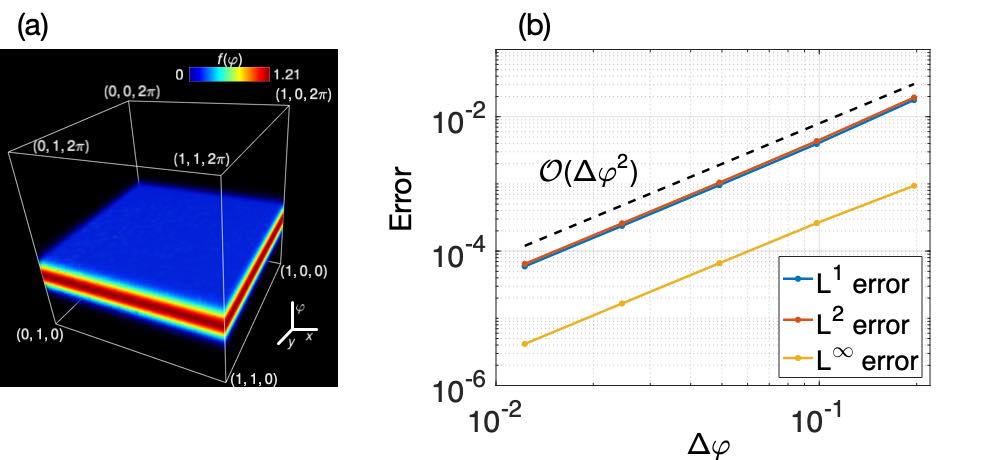}
	\caption
	{
		\label{fig:test_case_stationary_phase_synchronization_3d}
		(a) Numerical stationary solution of (3+1)-dimensional Eq.~\eqref{eq:spatially_nonhomogeneous_continuum_pde_with_zero_lag} with $N\times M\times L=40\times40\times256$ grid points, whose projection onto $\varphi$-axis has the same shape as in Fig.~\ref{fig:test_case_stationary_phase_synchronization_1d}(a). (b) The convergence of error for the problem \eqref{eq:spatially_nonhomogeneous_continuum_pde_with_zero_lag} in $L^1$, $L^2$, and $L^\infty$ norms at $t=100$. The points correspond to grids with 32, 64, 128, 256, and 512 points in $\varphi$. The grid contains $40\times40$ points in spatial variables. Other parameters are $\sigma=1$, $\varrho=0.05$, $D_\varphi=0.1$.
	}
\end{figure}

Spatially homogeneous PDE \eqref{eq:continuum_chimera_model_spatially_homogeneous} was obtained from the original one \eqref{eq:continuum_chimera_model} under the assumption of spatial homogeneity of the system. On one hand, it allowed us to derive analytical results to understand model's behavior but on the other hand, this does not correspond to the original particle flow. We next consider the complete (3+1)-dimensional equation \eqref{eq:continuum_chimera_model} but start from the analysis in a parameter region where spatially homogeneous solutions are stable. Again, we first assume a simpler case where the phase lag is not taken into account, i.e. $\alpha=0$ and consider the following IVP
\begin{equation}
\label{eq:spatially_nonhomogeneous_continuum_pde_with_zero_lag}
\begin{cases}
	\partial_t f(r,\varphi,t) = -\nabla_r\cdot[v_0e(\varphi) f(r,\varphi,t)] -\partial_\varphi\left[ w[f](r,\varphi,t) f(r,\varphi,t) \right] + D_\varphi\partial_{\varphi\varphi}f(r,\varphi,t) & \text{in } \Omega\times(0,\infty) \\
	f(r,\varphi,0) = c_0 +\! \displaystyle\sum\limits_{n,m,l=1}^{K} c_{nml} \sin(2\pi n x - \alpha_{nml}) \sin(2\pi m y - \beta_{nml}) \sin(l \varphi - \gamma_{nml}) & \text{on } \Omega\times\left\{t=0\right\},
\end{cases}
\end{equation}
where $e(\varphi)=(\cos\varphi,\sin\varphi)\in\mathbb{S}^1$ is a unit velocity vector and the angular torque is defined as $w[f](r,\varphi,t) = \dfrac{\sigma}{\vert C(r)\vert} \iiint_{C(r)} f(r',\varphi',t)\sin(\varphi' - \varphi) \mathrm{d}r'\mathrm{d}\varphi'$. Coefficients $c_0,c_{nml}\in\mathbb{R}$ in the initial condition are chosen such that the density function $f(r,\varphi,0)$ is nonnegative and normalized, and the normalization term $\vert C(r)\vert$ is defined by Eq.~\eqref{eq:spatially_nonhomogeneous_neighborhood_mass}. The shifts of the arguments are chosen at random $\alpha_{nml},\beta_{nml},\gamma_{nml} \sim \mathcal{U}(0,2\pi)$ in order to approximate a fluctuating density field. One can consider such an initial condition as a generalization of initial conditions from spatially homogeneous examples \eqref{eq:spatially_homogeneous_continuum_pde_with_zero_lag},\eqref{eq:spatially_homogeneous_continuum_pde_with_nonzero_lag} to the three-dimensional case.

From the linear stability analysis \cite{kruk2020pre}, we know that for $\alpha=0$ the problem \eqref{eq:spatially_nonhomogeneous_continuum_pde_with_zero_lag} has the same set of solutions as the spatially homogeneous problem \eqref{eq:spatially_homogeneous_continuum_pde_with_zero_lag}, i.e. a uniform density function \eqref{eq:uniform_density_function} and the von Mises density function \eqref{eq:von_mises_density_function}. It appears that these solutions are stable against spatially inhomogeneous perturbations in the absence of phase lag. The reason why spatial patterns, such as traveling bands, do not develop for parameter values close to the order-disorder transition line is because the alignment interaction is normalized in $w[f](r,\varphi,t)$. We remark that in case the normalization term $\vert C\vert$ is removed, one actually observes the emergence of such spatial patterns, as is well known for active matter systems with polar interactions \cite{levis2019:prr,nagai2015:prl}. An exemplary solution to the problem \eqref{eq:spatially_nonhomogeneous_continuum_pde_with_zero_lag} is presented in Fig.~\ref{fig:test_case_stationary_phase_synchronization_3d}(a). One can see that the solution is indeed homogeneous with respect to $x$ and $y$ but has a unimodal symmetric profile in $\varphi$. Its projection onto the $\varphi$-axis is qualitatively similar to the one in Fig.~\ref{fig:test_case_stationary_phase_synchronization_1d}(a).

The analysis of solutions for different grid sizes shows the second order convergence in terms of $\Delta\varphi$ (cf. Fig.~\ref{fig:test_case_stationary_phase_synchronization_3d}(b)). Since the steady state is eventually spatially homogeneous, we cannot test the error convergence in terms of $\Delta x$ and $\Delta y$. The errors were computed using Eqs.~\eqref{eq:error_norms_exact} with Eq.~\eqref{eq:von_mises_density_function} as a spatially homogeneous reference solution. The time step was chosen $\Delta t = 5\cdot10^{-4}$. The errors were computed at $t=100$, when the numerical solution had converged to the steady state. The model parameters were taken from the region of stability of partially synchronized motion.

\subsection{Nonstationary phase synchronization (3D)\label{sec:nonstationary_phase_synchronization_3d}}

\begin{figure}[]
	\centering
	\includegraphics[width=0.85\textwidth]{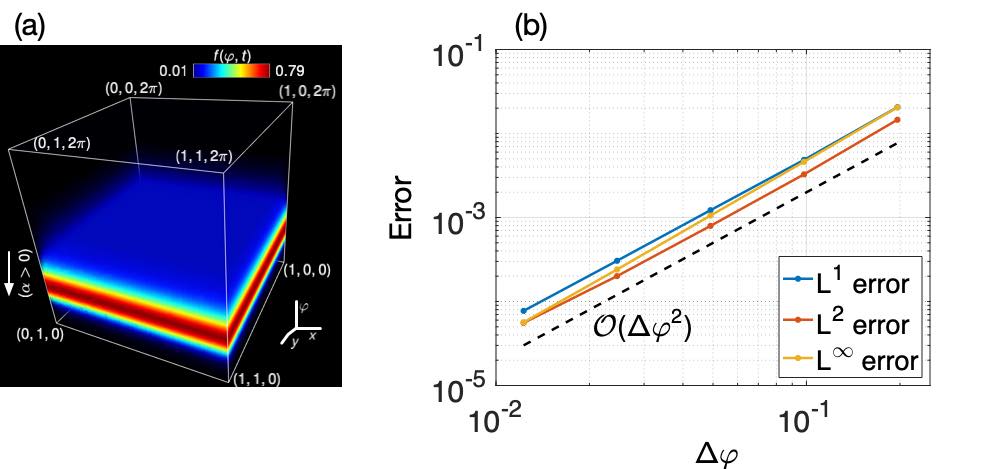}
	\caption
	{
		\label{fig:test_case_nonstationary_phase_synchronization_3d}
		(a) Numerical plane wave solution of (3+1)-dimensional Eq.~\eqref{eq:spatially_nonhomogeneous_continuum_pde_with_nonzero_lag} with $N\times M\times L=40\times40\times256$ grid points (see \cite{bcs_youtube_channel,figshare} for a video of its motion), whose projection onto $\varphi$-axis has the same shape as in Fig.~\ref{fig:test_case_nonstationary_phase_synchronization_1d}(a). The white arrow indicates the direction of motion of the plane wave. (b) The convergence of error for the problem \eqref{eq:spatially_nonhomogeneous_continuum_pde_with_nonzero_lag} in $L^1$, $L^2$, and $L^\infty$ norms at $t=200$. The points correspond to grids with 32, 64, 128, 256, and 512 points in $\varphi$. The grid has $40\times40$ points in spatial variables. Other parameters are $\sigma=1$, $\varrho=0.05$, $\alpha=1$, $D_\varphi=0.1$.
	}
\end{figure}

We now proceed by allowing nonzero phase lag $\alpha\neq0$ in a general three-dimensional system \eqref{eq:continuum_chimera_model}. We know that in addition to a uniform disordered \eqref{eq:uniform_density_function} and spatially homogeneous ordered motion \eqref{eq:traveling_wave_solution}, spatially inhomogeneous solutions emerge for sufficiently high values of $\alpha$. Before we proceed to the analysis of such solutions, we would look into performance of our numerical scheme in a parameter region, where a spatially homogeneous skewed unimodal density function \eqref{eq:traveling_wave_solution} is stable against spatially inhomogeneous perturbations in order to be consistent with the previous development. We consider the following IVP
\begin{equation}
\label{eq:spatially_nonhomogeneous_continuum_pde_with_nonzero_lag}
\begin{cases}
	\partial_t f(r,\varphi,t) = -\nabla_r\cdot[v_0e(\varphi) f(r,\varphi,t)] - \partial_\varphi\left[ w[f](r,\varphi,t) f(r,\varphi,t) \right] + D_\varphi\partial_{\varphi\varphi}f(r,\varphi,t) & \text{in } \Omega\times(0,\infty) \\
	f(r,\varphi,0) = c_0 +\! \displaystyle\sum\limits_{n,m,l=1}^{K} c_{nml} \sin(2\pi n x - \alpha_{nml}) \sin(2\pi m y - \beta_{nml}) \sin(l \varphi - \gamma_{nml}) & \text{on } \Omega\times\left\{t=0\right\},
\end{cases}
\end{equation}
with $w[f](r,\varphi,t) = \dfrac{\sigma}{\vert C(r)\vert} \iiint_{C(r)} f(r',\varphi',t)\sin(\varphi' - \varphi - \alpha) \mathrm{d}r'\mathrm{d}\varphi'$. For $\alpha$ sufficiently small so as to guarantee the stability of spatially homogeneous densities \eqref{eq:traveling_wave_solution}, an exemplary numerical solution is illustrated in Fig.~\ref{fig:test_case_nonstationary_phase_synchronization_3d}(a). One can see that it is indeed homogeneous in $x$ and $y$ but has a characteristic skewed shape in $\varphi$ (the blue region is more pronounced above the plane than below). Its projection onto the $\varphi$-axis gives a qualitatively similar solitary wave as the one in Fig.~\ref{fig:test_case_nonstationary_phase_synchronization_1d}(a). The wave moves transversal to the $\varphi$ axis with some constant velocity $v$, the sign of which depends conversely on $\alpha$. Thus, it has the form of a plane wave in $\Omega$. Such a form of the solution corresponds to a partially synchronized steadily rotating particle flow, which was investigated and termed a nonlocalized self-propelled chimera state in \cite{kruk:aps2018}.

As in the previous example, because of spatial homogeneity of the solution, we are able to determine the order of error convergence versus $\Delta\varphi$ only (cf. Fig.~\ref{fig:test_case_nonstationary_phase_synchronization_3d}(b)). The errors were computed with respect to a spatially homogeneous plane wave solution whose $\varphi$-profile is given by Eq.~\eqref{eq:traveling_wave_solution} using Eqs.~\eqref{eq:error_norms_exact}. The time step was chosen $\Delta t=5\cdot10^{-4}$. The errors were computed at $t=200$, when numerical solutions converged to the plane wave form.

\subsection{Phase transitions of spatially homogeneous solutions\label{sec:phase_transitions_of_spatially_homogeneous_solutions}}

In this section, we analyze how phase transitions of spatially homogeneous solutions are captured by the finite volume scheme. Moreover, we concentrate on the nonstationary case with nonzero phase lag and refer the reader to \cite{carrillo2019:jcp} for the numerical studies of phase transitions of steady states in the continuum Kuramoto model. Since we know that the continuum Kuramoto-Sakaguchi equation \eqref{eq:continuum_chimera_model_spatially_homogeneous} has a solution of the traveling wave form (see Proposition~\ref{prop:traveling_wave_solution}), it is enough to study its profile $g(\omega) = f(\varphi-vt,0) = f(\varphi,t)$, where $v$ is unknown, in the analysis of related phase transitions. 

The phase transitions between spatially homogeneous polar order and disordered motion are commonly quantified with respect to the polar order parameter defined in \eqref{eq:polar_order_parameter_homogeneous}, i.e., it is an average orientation on the unit circle with respect to a given density function. In the traveling wave profile, the polar order parameter can be re-expressed as
\begin{equation*}
	Re^{i\tilde{\Theta}} = \int_{\mathbb{T}} e^{i\omega} g(\omega)\; \mathrm{d}\omega,
\end{equation*}
where the average direction $\tilde{\Theta}$, expressed in the traveling wave profile, can be shifted to the origin without loss of generality due to the translation invariance of the PDE \eqref{eq:continuum_chimera_model_spatially_homogeneous}, i.e. $\tilde{\Theta}\equiv0$. Note that by Proposition~\ref{prop:traveling_wave_solution}, the solution in the traveling wave form recursively depends on the above polar order parameter magnitude $R$ as well as the traveling wave velocity $v$, i.e., $g(\omega) = g(\omega; R, v)$. Thus, the above equation can be considered as complex-valued self-consistent equation (SCE) for $R$ and $v$. Alternatively, by expanding it into the real and imaginary parts, we see that the order parameter must satisfy the following set of SCEs:
\begin{equation}
\label{eq:selfconsistent_equations}
	R = \int_\mathbb{T} g(\omega; R,v)\cos\omega\; \mathrm{d}\omega,\qquad 0 = \int_\mathbb{T} g(\omega; R,v)\sin\omega\; \mathrm{d}\omega.
\end{equation}
This system does not have an analytical solution but we can solve it numerically for polarization $R$ and additionally for velocity $v$ using the Newton-Raphson method for the system \eqref{eq:selfconsistent_equations}. The numerical solution of this system in terms of the model parameters $\alpha$ and $D_\varphi$ is presented in Figs.~\ref{fig:selfconsistent_equations_nonzero_alpha}(a) and (b).

\begin{figure}[]
	\centering
	\includegraphics[width=1.0\textwidth]{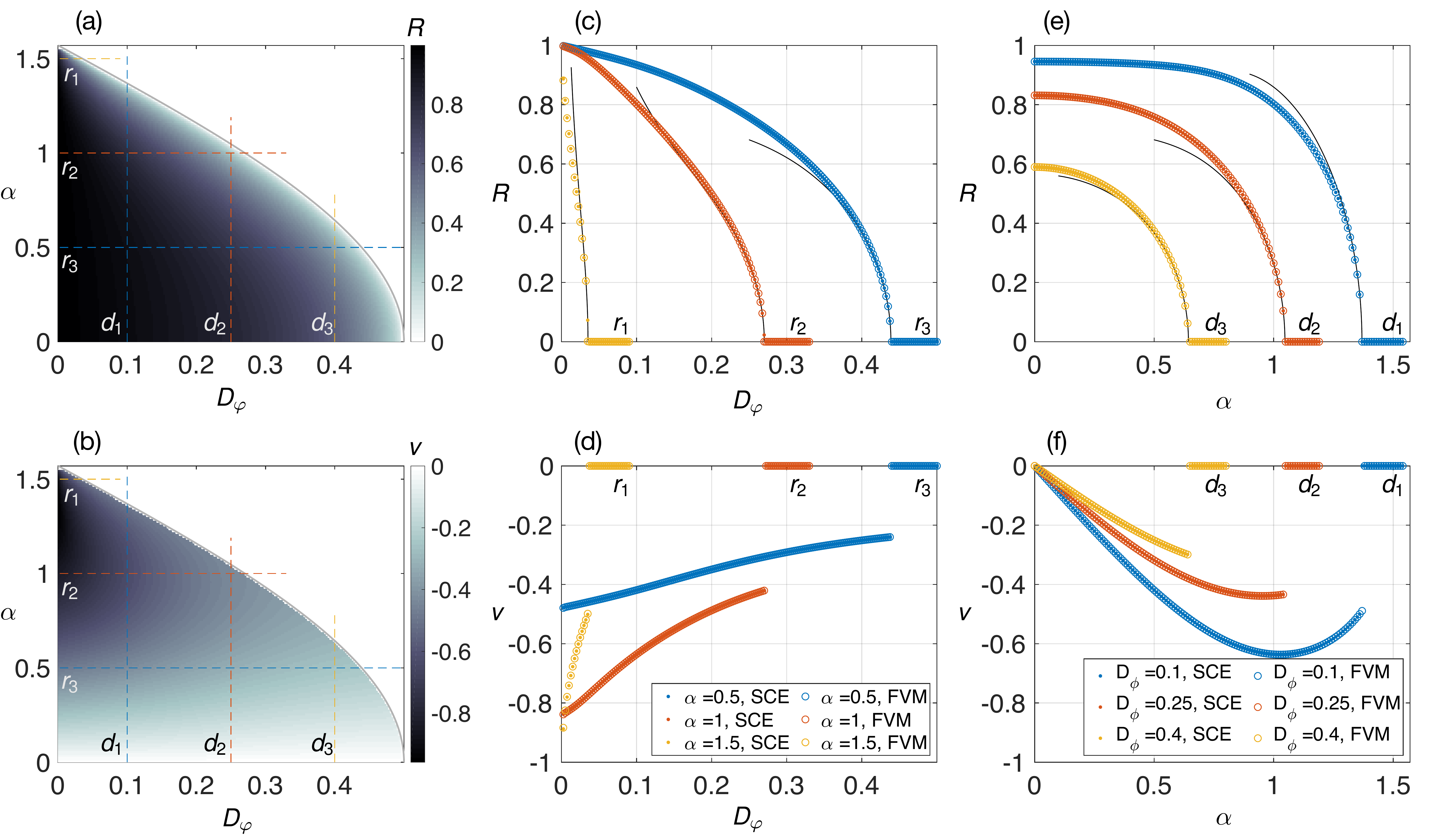}
	\caption
	{
		\label{fig:selfconsistent_equations_nonzero_alpha} 
		(a),(b) Solution of the system of SCEs \eqref{eq:selfconsistent_equations} comprising a density function as a traveling wave solution \eqref{eq:traveling_wave_solution} and a complex order parameter defined by that solution. The system determines (a) magnitude of the order parameter $R$ and (b) a group velocity $v$ of the traveling wave versus the phase lag $\alpha$ and diffusion coefficient $D_\varphi$. The gray line indicates the order-disorder transition line $D_\varphi=\frac{\sigma}{2}\cos\alpha$. The critical group velocity along that line is $v=-\frac{\sigma}{2}\sin\alpha$. The colored lines show intervals of parameter values used in (c)-(f). The marks $r_{1,2,3}$, and $d_{1,2,3}$ denote corresponding lines. (c),(d) Evolution of the order parameter magnitude $R$ and the group velocity $v$, respectively, versus the diffusion coefficient $D_\varphi$ for different phase lag values $\alpha$. (e),(f) Evolution of $R$ and $v$, respectively, versus $\alpha$ for different diffusion coefficients $D_\varphi$. The dots denote values found by solving the system of SCEs \eqref{eq:selfconsistent_equations} whereas the circles denote values produced by the FVM with the grid of $L=256$ points. The black lines show the exact form of $R$ next to the order-disorder transition line, as predicted by the hydrodynamic theory.
	}
\end{figure}

Given the above result, we are able to investigate phase transitions related to a spatially homogeneous rotating system \eqref{eq:spatially_nonhomogeneous_continuum_pde_with_nonzero_lag}. For each parameter set, we solve it starting from quasiuniform initial conditions, described earlier. We discretize the domain $\mathbb{T}$ into $L=256$ points and perform all computations of this section with the time step $\Delta t=10^{-4}$ until $t=10^4$. The results are presented in Figs.~\ref{fig:selfconsistent_equations_nonzero_alpha}(c) and (e) for the order parameter magnitude $R$, and in Figs.~\ref{fig:selfconsistent_equations_nonzero_alpha}(d) and (f) for the group velocity $v$. Because either the coupling strength $\sigma$ or the diffusion coefficient $D_\varphi$ may be eliminated by an appropriate rescaling of time in Eq.~\eqref{eq:spatially_nonhomogeneous_continuum_pde_with_nonzero_lag}, we fix $\sigma=1$ without loss of generality and investigate the behavior in terms of $\alpha$ and $D_\varphi$. Since a phase transition may happen by changing either the diffusion level $D_\varphi$ or the phase lag $\alpha$, we look into both possibilities. First, we fix $\alpha=0.5,1.0,1.5$ and investigate the behavior of $R=R(D_\varphi)$ (cf. Fig.~\ref{fig:selfconsistent_equations_nonzero_alpha}(c)) and $v=v(D_\varphi)$ (cf. Fig.~\ref{fig:selfconsistent_equations_nonzero_alpha}(d)). The phase transition occurs at $D_\varphi=\frac{\sigma}{2}\cos\alpha$, which one obtains by substituting the traveling wave solution \eqref{eq:traveling_wave_solution_profile} into SCEs \eqref{eq:selfconsistent_equations} and expanding right-hand sides with respect to $R$ around $R=0$ \cite{kruk2020pre}. The black line additionally shows how the order parameter behaves next to the transition line, according to
\begin{equation*}
	R \approx \sqrt{\frac{4D_\varphi^2 + v^2}{D_\varphi}(\cos\alpha-2D_\varphi)},
\end{equation*}
which is known from the hydrodynamic description of the particle model \eqref{eq:chimera_sde} and derived in \cite{kruk2020pre}. The grid size for $D_\varphi$ was chosen $0.0025$. For comparison reasons, we show the results obtained with the FVM and those of SCEs \eqref{eq:selfconsistent_equations}. Next, we fix $D_\varphi=0.1,0.25,0.4$ and investigate the behavior of $R=R(\alpha)$ (cf. Fig.~\ref{fig:selfconsistent_equations_nonzero_alpha}(e)) and $v=v(\alpha)$ (cf. Fig.~\ref{fig:selfconsistent_equations_nonzero_alpha}(f)). Again, as expected, the phase transition occurs at $\alpha=\arccos\left(\frac{2D_\varphi}{\sigma}\right)$. The grid size for $\alpha$ was chosen $0.01$. The black line and the results of the SCEs \eqref{eq:selfconsistent_equations} are obtained as before.

A few remarks on phase transitions between spatially homogeneous states are in order. We quantify them in terms of a polar order parameter magnitude $R$. Therefore, under the assumption of spatial homogeneity, we observe a second order transition from partially synchronized to disordered state (cf. Figs.~\ref{fig:selfconsistent_equations_nonzero_alpha}(c) and (e)). However, it is known that for polar active matter systems, the transition from uniform disordered to ordered motion is separated by a region with density-segregated patterns such as traveling bands. It has been established that for the Boltzmann equation for self-propelled particle systems, the respective transitions may be of first and second order depending on a system size \cite{thueroff:prx2014}. As we mentioned earlier, we do not observe such a spatially inhomogeneous regime close to the order-disorder transition line because nonlocal alignment interactions in \eqref{eq:continuum_chimera_model} are normalized, which makes spatially homogeneous solutions more stable against spatially inhomogeneous perturbations \cite{kruk2020pre}.

\subsection{Spatially inhomogeneous solutions\label{sec:spatially_nonhomogeneous_solutions}}

\begin{figure}[t]
	\centering
	\includegraphics[width=0.85\textwidth]{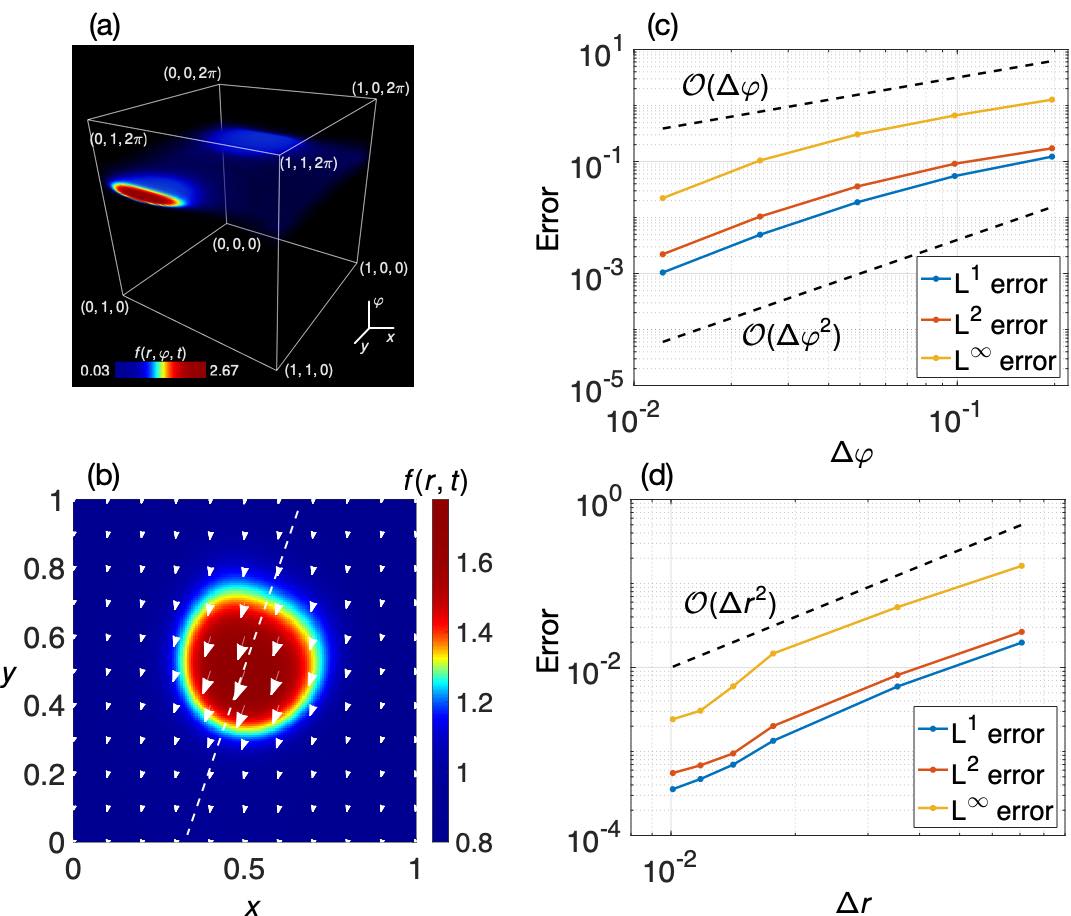}
	\caption
	{
		\label{fig:test_case_localized_chimera}
		(a) Spatially inhomogeneous solution of \eqref{eq:spatially_nonhomogeneous_continuum_pde_with_nonzero_lag} in the form of localized chimera state. The grid contains $N\times M\times L=160\times160\times128$ points. 
		(b) Its projection \eqref{eq:hydrodynamic_spatial_density} into spatial coordinates. White arrows indicate the momentum field \eqref{eq:hydrodynamic_momentum_field} generated by the solution. The dashed white line \eqref{eq:parameterized_line} passes through the center of maximal density of the projection and is aligned with the momentum field at this point.
		(c,d) Convergence of error for the same problem and parameter region in $L^1$, $L^2$, and $L^\infty$ norms at $t=1$. The points correspond to grids (c) in $\varphi$ with 32, 64, 128, 256, and 512 points, and (d) in $x,y$ with $20\times20$, $40\times40$, $80\times80$, $100\times100$, $120\times120$, and $140\times140$ points. Here $\Delta r=\sqrt{\Delta x^2 + \Delta y^2}$. The grid also contains (c) $40\times40$ points in $x,y$, (d) 128 points in $\varphi$. Other parameters are $v_0=1$, $\sigma=4$, $\varrho=0.3$, $\alpha=1.54$, $D_\varphi=0.01$.
	}
\end{figure}

We have seen that the presented finite volume scheme is capable of reproducing correct behavior under the assumption of spatial homogeneity. Therefore, we proceed to the general PDE \eqref{eq:continuum_chimera_model} in a parameter region where spatially inhomogeneous patterns occur. We consider the same IVP \eqref{eq:spatially_nonhomogeneous_continuum_pde_with_nonzero_lag} from the previous section. From the linear stability analysis of this PDE from the point of view of kinetic theory \cite{kruk2020pre}, we know that there exists a region in the parameter space of $\varrho/v_0$, $\alpha$, and $D_\varphi$ where the spatially homogeneous skewed unimodal density function \eqref{eq:traveling_wave_solution_profile} becomes unstable against spatially dependent perturbations and one observes numerous spatially inhomogeneous patterns.

In this paper, we would like to concentrate on the analysis of one of such patterns, which we refer to as a localized chimera state \cite{kruk:aps2018}. From the point of view of the particle model \eqref{eq:chimera_sde}, such a state simultaneously consists of two rather distinct groups of particles. The first group forms a subset of particles that gather into a circular polarized cloud, which rotates in the background of the rest of chaotically moving particles. In the continuum limit, this state is characterized by the formation of a high density skewed ellipsoidal region in $\Omega$ (cf. Fig.~\ref{fig:test_case_localized_chimera}(a)) that follows a helical path. We can obtain the aforementioned high density circular cloud as a projection of such a solution into spatial coordinates (cf. Fig.~\ref{fig:test_case_localized_chimera}(b))
\begin{equation}
\label{eq:hydrodynamic_spatial_density}
	f(r,t) = \int_\mathbb{T} f(r,\varphi,t)\; \mathrm{d}\varphi.
\end{equation}
The spatial profile has a form of a bivariate solitary wave which possesses a characteristic front-end asymmetry. This can be observed if we look at the solution profile along the line (cf. Fig.~\ref{fig:nonhomogeneous_phase_transitions}(b), a white dashed line), centered at the point of maximal density and directed according to the velocity field at this point. We look for the point of maximal spatial density as
\begin{equation*}
	r_\text{max} = \argmax_{r\in\mathbb{U}^2} f(r,t).
\end{equation*}
The velocity field can be retrieved from the momentum field, in turn, obtained as
\begin{equation}
\label{eq:hydrodynamic_momentum_field}
	u(r,t) = \int_\mathbb{T} e(\varphi)f(r,\varphi,t)\; \mathrm{d}\varphi
\end{equation}
where $e(\varphi)=(\cos\varphi,\sin\varphi)\in\mathbb{S}^1$. Note that the momentum field such defied is isomorphic to the global polar order parameter, we used earlier \eqref{eq:global_polar_order_parameter}. As a result, the line can be parameterized as
\begin{equation}
\label{eq:parameterized_line}
	r(s) = r_\text{max} + e(\varphi_\text{max}) s,\quad s\in\left[-\half,\half\right],
\end{equation}
where $\varphi_\text{max} = \arg(u(r_\text{max}))$. Using the piecewise linear reconstruction of the density function \eqref{eq:piecewise_linear_reconstruction_3d}, we can find the approximate values of the density function at any point on the line (cf. Fig.~\ref{fig:nonhomogeneous_phase_transitions}(b)). Moreover, for solutions, where the radius of rotation of the localized cluster is sufficiently large so that the cluster does not rotate around a fixed point, the transverse profile of the spatial projection $f(r,t)$ has a symmetric form. 

We remark that Eqs.~\eqref{eq:hydrodynamic_spatial_density},\eqref{eq:hydrodynamic_momentum_field} constitute a hydrodynamic description of the kinetic PDE \eqref{eq:continuum_chimera_model} where polar order is expected to emerge, and is often used to get analytical insights into the dynamics. However, the known drawback of this approach is that it is limited to the regimes close to equilibrium. As opposed to that, our FVMs are applicable to any region in the parameter space, which we will employ in the following.

In a general spatially inhomogeneous setup, we are able to calculate error convergence in both angular and spatial variables. Because we do not know any exact solution with spatial dependence, we use Eqs.~\eqref{eq:error_norms_benchmark} to compute the norms. The reference solution was the one with $N\times M\times L=40\times40\times1024$ grid points. First, we fix $N=40$, $M=40$ and vary the angular grid size $\Delta\varphi$. We observe the second order convergence for $\Delta\varphi$ sufficiently small whereas it approaches the first order for largest grid sizes (cf. Fig.~\ref{fig:test_case_localized_chimera}(c)). The reason for the first order behavior is that such discretizations cannot capture high density gradients in $\varphi$ so that the numerical error is accumulated rather fast. Subsequently, since the dynamics in angular and spatial dimensions are coupled, this results in solutions, diffused away from the correct dynamics. The time step was chosen $\Delta t=5\cdot10^{-4}$. Next, we fix $L=128$ and vary the spatial grid size $\Delta x,\Delta y$. We again observe the second order error convergence versus $\Delta r=\sqrt{(\Delta x)^2 + (\Delta y)^2}$ even for quite small grid sizes with $N\times M=20\times20$ points (cf. Fig.~\ref{fig:test_case_localized_chimera}(d)). We explain such robustness of the results by the fact that the solutions were computed for quite large $\varrho=0.3$, resulting in long range interactions. The time step was chosen $\Delta t=5\cdot10^{-4}$. The reference solution was the one with $N\times M\times L=160\times160\times128$ grid points.

\subsection{Phase transitions of spatially inhomogeneous solutions\label{sec:phase_transitions_of_spatially_nonhomogeneous_solutions}}

In Section~\ref{sec:phase_transitions_of_spatially_homogeneous_solutions}, we found that in spatially homogeneous systems, the transition between polar order and disorder is of second order. Now that we have an established protocol to generate spatially inhomogeneous solutions, we are interested to learn about their related phase transitions. By choosing an appropriate scale for the microscopic particle velocity $v_0$ and interparticle interaction radius $\varrho$, we find a phase diagram in the $(\alpha,D_\varphi)$-parameter space where three distinct solutions are observed, i.e., a spatially homogeneous disordered motion (SHDM), a spatially homogeneous ordered motion (SHOM), and a spatially inhomogeneous motion (SNM) in the form of a localized chimera state (cf. Fig.~\ref{fig:nonhomogeneous_phase_transitions}(a)). The figure was obtained from the kinetic linear stability analysis of Eq.~\eqref{eq:traveling_wave_solution_profile}, performed in detail in \cite{kruk2020pre}. We fix $v_0=0.25$, $\sigma=1$, $\varrho=0.3$. Phase transitions discussed earlier correspond to the transition between SHDM and SHOM, with the bifurcation occurring at $D_\varphi=\frac{\sigma}{2}\cos\alpha$ (a black solid line). In the gray region, starting from quasirandom initial conditions, one observes the formation of spatial structures. The formation happens in two stages. First, the system smooths out initial spatial perturbations but gradually polarizes until the occurrence of a skewed angular profile similar to the one in Fig.~\ref{fig:test_case_nonstationary_phase_synchronization_1d}(a). Second, the remaining spatial perturbations start to act on the solution and accumulate eventually into a bivariate unimodal shape. We remark that due to finite numerical precision, the second step might not be triggered for any quasirandom initial conditions given that spatial variations become of order of round-off error $\mathcal{O}(10^{-18})$. To circumvent that, one might either look for such initial conditions that preserve enough spatial perturbations at the time point of maximal synchronization or use multiprecision arithmetic libraries, like the one we used in our implementation.

\begin{figure}[t]
	\centering
	\includegraphics[width=1.0\textwidth]{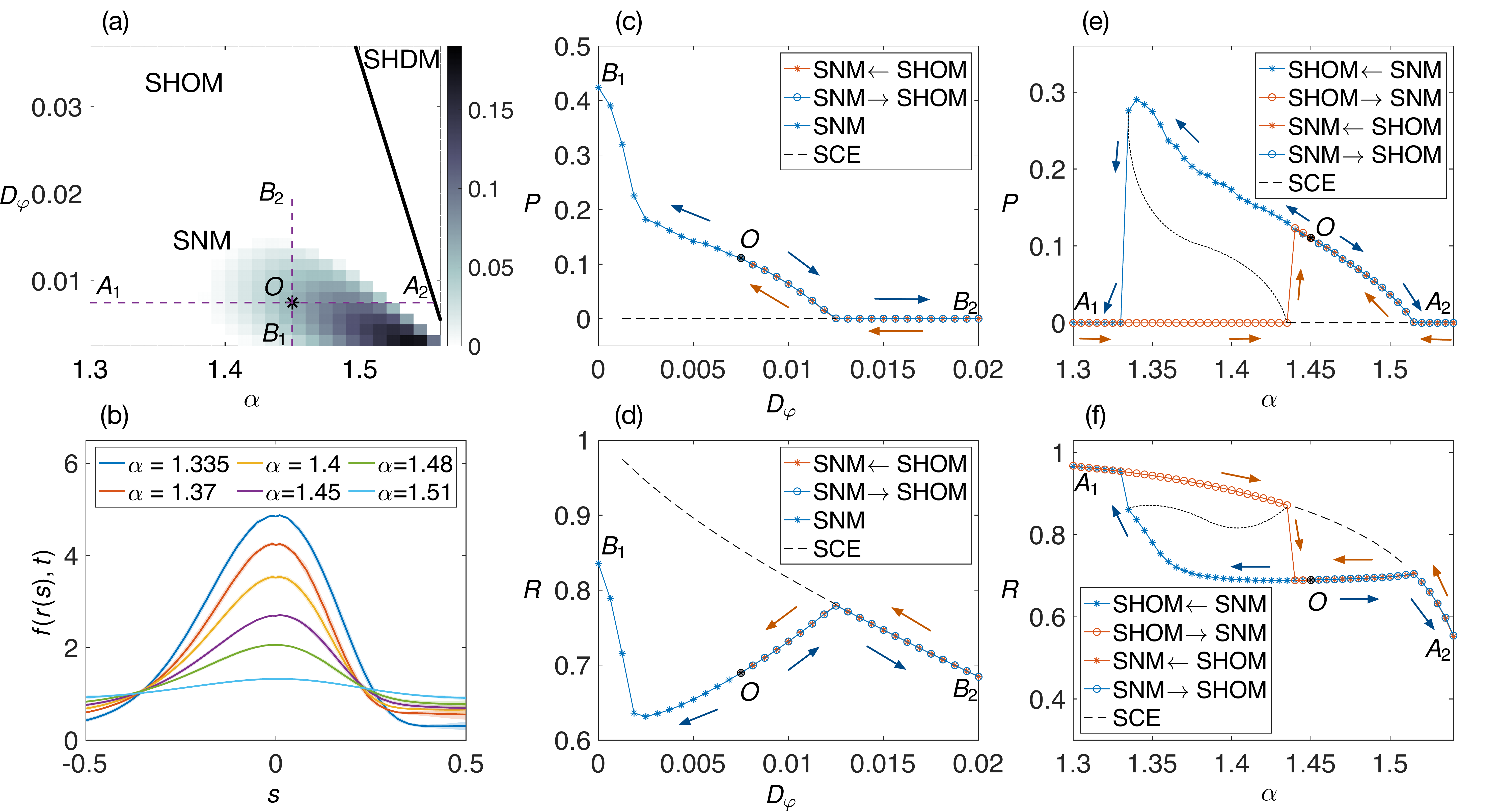}
	\caption
	{
		\label{fig:nonhomogeneous_phase_transitions}
		Bifurcations scenarios for SNM, represented by localized chimera states. (a) Phase diagram in the parameter space of diffusion $D_\varphi$ and phase lag $\alpha$; color shows the maximal real part of the strongest unstable Fourier mode \cite{kruk2020pre}; the black solid line is the order-disorder transition line $D_\varphi=\frac{\sigma}{2}\cos\alpha$. Black star indicates the initial value $O=(\alpha,D_\varphi)=(1.45,0.0075)$ used as a starting point for the continuation method (see the text). Purple dashed lines show continuation paths presented in (c,d,e,f).
		(b) Spatial profiles along the line of nonlocal collective motion in Fig.~\ref{fig:test_case_localized_chimera}(b) for different values of the phase lag taken along the branch of stability of SNM in (e). Solid lines represent averaged densities over 10 time units and shaded regions denote respective standard deviations.
		(c,d) Localization $P$ \eqref{eq:localization_order_parameter} and polar $R$ \eqref{eq:global_polar_order_parameter} order parameters versus $D_\varphi$, respectively. One observes a second order phase transition between SNM and SHOM with bifurcation at $D_\varphi\approx0.0125$. Black dashed lines denote unstable branches and have been computed from the system of SCEs \eqref{eq:selfconsistent_equations}. (e,f) Localization $P$ and polar $R$ order parameters versus $\alpha$, respectively. One observes two types of phase transitions, namely, the first order one, accompanied with a hysteresis loop, on the path $OA_1$ with bifurcation points $\alpha\approx1.33,1.44$ and the second order one on the path $OA_2$ with bifurcation at $\alpha\approx1.515$. Black dashed lines denote unstable branches and have been obtained from \eqref{eq:selfconsistent_equations}. Black dotted lines are drawn "by hand" in place of unknown unstable branches. Colored arrows in (c,d,e,f) indicate directions of bifurcation paths.
	}
\end{figure}

As one can see in Fig.~\ref{fig:nonhomogeneous_phase_transitions}(a), the new phase transitions should occur between SHOM and SNM by varying either the diffusion coefficient $D_\varphi$ or the phase lag parameter $\alpha$. We inspect each route separately. Before we do that, we need to establish an appropriate order parameter to measure the level of spatial localization induced by a PDE solution as well as be able to detect changes in spatial variation of solutions upon varying model parameters. First, in a similar way as we might consider the global polar order parameter $R(t)$ \eqref{eq:global_polar_order_parameter} as a measure of angular localization of orientation vectors $e(\varphi)$ belonging to $\mathbb{S}^1$, manifested in a momentum field definition \eqref{eq:hydrodynamic_momentum_field}, we define an order parameter that measures the level of spatial localization of elements belonging to $\mathbb{S}^1\times\mathbb{S}^1$ in the following way:
\begin{equation}
\label{eq:localization_order_parameter}
	P(t)e^{i\Psi(t)} = \int_\Omega f(r,\varphi,t)e^{i2\pi(x+y)}\; \mathrm{d}r\mathrm{d}\varphi.
\end{equation}
This parameter provides the following information. For systems with all the probability mass compressed in one point, i.e., for point measures, the spatial localization is most pronounced and the magnitude of the order parameter attains its maximal value $P=1$. In the opposite case, for systems with uniform distribution of matter, no spatial localization is observed and the order parameter magnitude attains its minimal value $P=0$. For partial localization inside a particle flow, we therefore have $P\in(0,1)$. The phase $\Psi$ is irrelevant to our purposes.
Second, to detect changes in spatial structure of solutions while changing model parameters, we introduce the following maximum absolute spatial deviation measure \cite{thueroff:prx2014}:
\begin{equation}
\label{eq:maximum_absolute_spatial_deviation}
	\delta_r(t) = \max_{(i,j,k)\in\Omega_{N,M,L}} \left\{\left\vert f_{i,j,k}(t) - f_{k}(t) \right\vert\right\},
\end{equation}
where spatially averaged solutions are computed as
\begin{equation*}
	f_{k}(t) = \frac{1}{NM} \sum_{(i,j)\in\mathbb{U}_N\times\mathbb{U}_M} f_{i,j,k}(t),\quad k\in\mathbb{T}_L.
\end{equation*}
For SHOM, this measure attains values of order $\mathcal{O}(10^{-14})$, when the magnitude of spatial variations is of a round-off error for double precision floating point values.

We now describe the transitions between SNM and SHOM. We start with a parameter point well inside a region where SNM is a stable solution, i.e. $O=(\alpha,D_\varphi)=(1.45,0.0075)$ (cf. Fig.~\ref{fig:sequence_of_spatially_nonhiomogeneous_solutions}(b)), and proceed in a continuation-like manner. First, we fix the phase lag parameter $\alpha$ and increase the diffusion level $D_\varphi\in[0.0075,0.02]$ with a parameter step size $\Delta D_\varphi=0.000625$ (cf. Fig.~\ref{fig:nonhomogeneous_phase_transitions}(a), a vertical path $OB_2$). Starting from quasirandom initial conditions \eqref{eq:spatially_nonhomogeneous_continuum_pde_with_nonzero_lag}, we let the system to converge to a solitary wave form of a localized chimera state and take this solution as an initial condition for a subsequent computation. Then, we change the diffusion level, take as a new initial condition the final solution from a previous parameter, and let the system equilibrate for $T=100$ time units with $\Delta t=5\cdot10^{-3}$. Afterwards, we accumulate values of $\delta_r(t)$ \eqref{eq:maximum_absolute_spatial_deviation} and continue integration until spatial deviations cease to fluctuate with $\mathrm{d}\delta_r(t)/\mathrm{d}t < 5\cdot10^{-5}$. We quantified the rate of change $\mathrm{d}\delta_r(t)/\mathrm{d}t$ as a linear fit to a set of values $\delta_r(t)$ over the last $50$ time units.
The result of this procedure is shown in Fig.~\ref{fig:nonhomogeneous_phase_transitions}(c). As we see, the spatial order parameter $P$ assumes a continuous path versus the diffusion constant $D_\varphi$. We then start from a point $B_2=(1.45,0.02)$ and go in the reverse direction gradually decreasing $D_\varphi$ with the same step size as before. For each new parameter, we take as an initial condition the final state of the system from a previous parameter and impose small spatial perturbations of the same form as in \eqref{eq:spatially_nonhomogeneous_continuum_pde_with_nonzero_lag} in order to allow spatial perturbations to grow provided that SHOM is unstable. We note that for spatially homogeneous systems, spatial variations are of order of a round-off error, and without such initial spatial perturbations, spatial deviations never grow even for parameter values where SHOM is indeed unstable. During this reverse round of continuation simulations, we integrate the system until $\mathrm{d}\delta_r(t)/\mathrm{d}t < 10^{-6}$. As a result, by varying the diffusion constant $D_\varphi$, we observe a supercritical transition between SNM and SHOM on the path $OB_2$ at $D_\varphi\approx0.0125$. Additionally, we provide the results of the continuation procedure in terms of the polar order parameter $R$ \eqref{eq:global_polar_order_parameter} (cf. Fig.~\ref{fig:nonhomogeneous_phase_transitions}(d)) to make the comparison with SHOM transitions.


As the next step, we study phase transitions between SNM and SHOM versus the phase lag $\alpha$. There are two ways, they can occur. To begin with, let us follow the right path $OA_2$ in Fig.~\ref{fig:nonhomogeneous_phase_transitions}(a). As an initial parameter point, we again set $O=(1.45,0.0075)$, keep $D_\varphi$ constant, and vary $\alpha\in[1.45,1.54]$ with a parameter step size $\Delta\alpha=0.005$. We follow the same continuation protocol as before and report the results in Fig.~\ref{fig:nonhomogeneous_phase_transitions}(e,f). We see that the transition between SNM and SHOM versus $\alpha$ on this path follows the similar scenario as the previously described transition along $OB_2$. That is, it is of second order with the bifurcation point $\alpha\approx1.515$. This comes as no surprise as both bifurcations occur close to the order-disorder transition line $D_\varphi=\frac{\sigma}{2}\cos\alpha$, where the effect of diffusion is substantial. Here, the increase of $D_\varphi$ is qualitatively similar to the decrease of $\alpha$. By doing so, a spatially localized region gradually smooths around.

\begin{figure}[]
	\centering
	\includegraphics[width=1.0\textwidth]{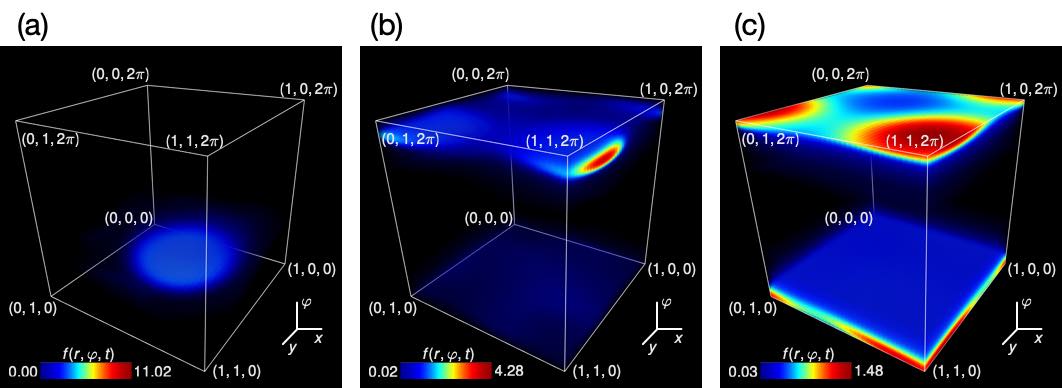}
	\caption
	{
		\label{fig:sequence_of_spatially_nonhiomogeneous_solutions}
		Evolution of SNM, represented by localized chimera states, along the bifurcation path $A_1A_2$ (cf. Fig.~\ref{fig:nonhomogeneous_phase_transitions}(a)). Snapshots of solutions (a) next to the first order transition at $\alpha=1.335$, (b) in the middle of the path at $\alpha=1.45$, and (c) before the second order transition at $\alpha=1.51$. The respective videos demonstrating temporal evolution of density functions can be found in \cite{bcs_youtube_channel,figshare}. Other parameters are $v_0=0.25$, $\sigma=1$, $\varrho=0.3$, $D_\varphi=0.0075$, $N=40$, $M=40$, $L=256$.
	}
\end{figure}

The second way, the transition between SNM and SHOM occurs, is along the left path $OA_1$ in Fig.~\ref{fig:nonhomogeneous_phase_transitions}(a). By performing the same continuation procedure for the parameter values $\alpha\in[1.3,1.45]$, we observe a hysteresis loop (cf. Figs.~\ref{fig:nonhomogeneous_phase_transitions}(e,f)), characteristic to first order transitions. Along the SNM$\rightarrow$SHOM path, we come across a bifurcation point $\alpha\approx1.33$ of a saddle-node type. Along the SHOM$\rightarrow$SNM path, we find a bifurcation point $\alpha\approx1.44$ of a subcritical type. Apart from the results of the linear stability analysis, which showed us where SNM is observable starting from any initial conditions (except for unstable solutions), we discover the existence of a bistability region where both SHOM and SNM are stable solutions, i.e., $\alpha\in(1.33,1.44)$ with $D_\varphi=0.0075$. Moreover, we observe some discrepancy between the results of the linear stability analysis and the continuation method. According to the stability analysis, starting from $\alpha\approx1.38$, SHOM should become unstable against spatially dependent perturbations while the continuation method provides $\alpha\approx1.44$. This is because the stability analysis was performed under the assumption of small microscopic particle velocities $v_0$ in a region of small diffusion, which is not the case here. Therefore, the numerical analysis of the PDE provides us with a better understanding of how solutions behave far from the order-disorder transition line.

Along bifurcation paths with respect to both parameters, SNM undergoes qualitatively similar transformations. Starting from second order transition points ($D_\varphi\approx0.0125$ in Fig.~\ref{fig:nonhomogeneous_phase_transitions}(c) and $\alpha\approx1.515$ in Fig.~\ref{fig:nonhomogeneous_phase_transitions}(e)), an ellipsoidal shape forms inside a high density layer (cf. Fig.~\ref{fig:sequence_of_spatially_nonhiomogeneous_solutions}(c)) but the layer itself does not disappear completely. In Fig.~\ref{fig:sequence_of_spatially_nonhiomogeneous_solutions}(b), one can observe coexistence of a localized cluster with such a layer for a parameter point even in the middle of a bifurcation path.
By decreasing parameters to minimal values with SNM being stable, the localized cluster is most clearly pronounced and the secondary layer dissolves (cf. Fig.~\ref{fig:sequence_of_spatially_nonhiomogeneous_solutions}(a)). The exemplary videos demonstrating temporal evolution of SNM solutions for different parameter values can be found in \cite{bcs_youtube_channel,figshare}. Fig.~\ref{fig:sequence_of_spatially_nonhiomogeneous_solutions} demonstrates qualitative changes in SNM with respect to the phase lag $\alpha$ but one obtains similar results by decreasing the diffusion level $D_\varphi$ towards zero.

\section{Conclusion\label{sec:conclusion}}

In this paper, we have discussed the problem of modeling systems of infinitely large populations of nonlocally interacting active Brownian particles. We have developed finite volume schemes to solve a class of nonlinear Vlasov-Fokker-Planck equations obtained in the continuum limit of such systems. According to the continuum limit methodology, these PDEs govern a temporal evolution of nonnegative probability density functions. Taking that into account, we considered the application of positivity-preserving slope limiters and the SSP-RK time discretization in our schemes in order to preserve the probabilistic nature of solutions. Because the problems of interest describe motion of isolated particle flows, the schemes additionally guarantee the conservation of probability mass. Given the theoretical insights on the system properties, we have considered separately one-dimensional problems for spatially homogeneous systems and general three-dimensional problems for spatially inhomogeneous ones. For each case, we have demonstrated that our finite volume schemes yield the second order with respect to discretization procedures. We have demonstrated that the schemes correctly reproduce known stationary and traveling wave solutions. In addition to the presentation of various continuum limit dynamics, we have performed the analysis on phase transitions between three classes of solutions, i.e., SHDM, SHOM, and SNM. This analysis has revealed the existence of both first and second order transitions with respect to changes in either the diffusion level or the phase lag.

\section*{Acknowledgments}

JAC was partially supported by EPSRC grant number EP/P031587/1 and the Advanced Grant Nonlocal-CPD (Nonlocal PDEs for Complex Particle Dynamics: Phase Transitions, Patterns and Synchronization) of the European Research Council Executive Agency (ERC) under the European Union's Horizon 2020 research and innovation programme (grant agreement No. 883363). HK acknowledges support from the European Research Council (ERC) with the consolidator grant CONSYN (Grant No. 773196).

\bibliography{finite_volume_schemes}

\end{document}